\newtheorem{theorem}{Theorem}[section]
\newtheorem{lemma}[theorem]{Lemma}
\newtheorem{corollary}[theorem]{Corollary}
\newtheorem{remark}[theorem]{Remark}
\numberwithin{equation}{section}
\newcommand{\bb}[1]{{\mathbb #1}}
\newcounter{as}[section]
\renewcommand{\>}{\rangle}
\begin{document}

\title{Hydrodynamic limit of exclusion processes with slow boundaries on hypercubes}

\author{Tiecheng Xu}

\address{\noindent IME-USP, Rua do Mat\~ao 1010, CEP 05508-090, S\~ao Paulo, Brazil.
  \newline e-mail: \rm \texttt{xutcmath@gmail.com} }

\begin{abstract}
We study the hydrodynamic limit of SSEP with slow boundaries on hypercubes in dimension $d\geq 2$.   The hydrodynamic limit equation is shown to be a heat equation with three different types of boundary conditions according to the slowness of the boundary dynamics. The proof is based on Yau's relative entropy method.
\end{abstract}

\keywords{Simple exclusion processes, Scaling limit, Slow boundaries} 

\maketitle

\section{Introduction}
The study of scaling limit of the interacting particle systems has achieved a lot of progress in the last decades. In recent years, particle systems with slow dynamics draw a lot of attentions. This type of models is interesting because it presents different behaviors in the macroscopic level depending on how slow the dynamics are put. Fruitful results has been obtained on the hydrodynamic limit(see, for instance \cite{fgn13a,fgs16,ft,bmns,fgn15,egn20a,egn20b,bpgn20,fmn20}), fluctuations of density field(see \cite{fgn13b,fgn19,efgnt20}) and large deviations from the hydrodynamic limit(see \cite{fn17,fgn21}).

Perhapes the first model to study how the strongness of the slow dynamics affect the density evolution in the macroscopic level, is the simple symmetric exclusion process(SSEP) with slow boundaries considered by R. Baldasso et.al in \cite{bmns}. The model is defined as follows. It is a sequence of Markov process indexed by $n$ defined on the set $\{1,2,\cdots, n-1\}$. The dynamics on the bulk is just the well known SSEP. Fix some parameters $\alpha,\beta\in(0,1), c>0$ and $\theta\geq 0$. For the boundary part, particles can enter the system at site $1$ with rate $c\alpha n^{-\theta}$ or leave with rate $c(1-\alpha) n^{-\theta}$ , while at the site $n-1$, particles exhibit a similar behavior, but with rates $c\beta n^{-\theta}$ for entrance and rate $c\beta n^{-\theta}$ for leaving the system. The hydrodynamic limit of the this model is shown to be a heat equation on interval $[0,1]$ with three types of boundary conditions depending on the value of $\theta$ in the diffusive time scale. If $\theta\in [0,1]$, the boundary conditions are is Dirichlet type; if $\theta=1$, the boundary conditions are Robin type; and if $\theta>1$, the boundary conditions are Neumann type . 

Recently, C.Erignoux, P. Gon\c calves and G.Nahum extend the result of \cite{bmns} to the one-dimensional SSEP with rather general slow boundary dymamics\cite{egn20a}\cite{egn20b}. The purpose of this article is to extend the result of \cite{bmns} in another direction, to the high dimension case. The particle systems that we study in this paper evolve on the discrete hypercubes with $(n-1)^d$ points in dimension $d\geq 2$. Similar to the one-dimensional model, the bulk dynamics is just SSEP. For the boundary dynamics, we consider a function $g$ defined on the boundary of the $d$-dimensional continuous hypercube, which plays the same role as $\alpha$ and $\beta$ in the one-dimensional model.  Particles can enter the system at site $x$ on the boundary with rate $cg([x/n]) n^{-\theta}$ if the site is empty, and leave with rate $c(1-g([x/n])) n^{-\theta}$, where $[x/n]$ represents the the point at the boundary of the continuous hypercube with minimum distance to $x/n$. We prove that the hydrodynamic limit of this high dimensional model is still a heat equation with three types of different types of boundary conditions according to the value of $\theta$, which are given in \eqref{<1}, \eqref{=1} and \eqref{>1}.

The main difficulty of our problem is that the invariant measure of boundary driven SSEP cannot be written down explicitly. For the one-dimensional model one could use a proper product measure to approximate the invariant measure with a small enough error in the sense of relative entropy, then apply the Varadhan's entropy method to derive the hydrodynamic limit equation. However it is not longer possible to find a product that can well approximate the invariant measure in the high dimensional case, thus Varadhan's entropy method does not work. What we use in the proof is the celebrated Yau's relative entropy method. One of main novelties of this paper is that time-dependent reference measure is a product measure not only depending on the hydrodynamic equation, but also on the slow parameter $\theta$ if $\theta\in[0,1)$. We are able to show that the relative entropy of state of the process at time $t$  with respect to the time-dependent reference measure is of order $o(n^d)$, which is just enough to derive the hydrodynamic limit. 

The paper is organized as follows. In Section \ref{model} we introduce the model and the result on the relative entropy estimate, then prove the result of hydrodynamic limit. In Section \ref{proof rel} we give the proof of Lemma \ref{lemma} which gives the bound of the relative entropy. We make some long computations used in the proof of Lemma \ref{lemma} in Section \ref{com}. In Section \ref{sp} we prove some properties of the classical solution of hydrodynamic limit equations.

\section{The Model and Main results}\label{model}
\subsection{The model}
We study the simple exclusion processes with slow boundary dynamics on a hypercube in dimension $d\geq 2$. The $d$-dimensional (open) hypercube, denoted by $U$, is the following set
$$U\,=\,\Big\{u\,=\,(u_1,u_2,\dots,u_d)\in \bb R^d\,:\,0< u_i<1\,,\, 1\leq i\leq d\Big\}.$$
Let $\Gamma$ be the boundary of $U$.
The interacting particle systems are defined on the largest lattice contained in $nU$, $n\in\bb N$, denoted by $U_n$, which is defined as follows, 
$$U_n\,:=\,\Big\{x\,=\,(x_1,x_2,\dots,x_d)\in \bb Z^d\,:0< x_i< n\,,\, \forall \, 1\leq i\leq d\Big\}.$$
Let us denote by $\Gamma_n$ the boundary of $U_n$:
$$\Gamma_n\,:=\,\Big\{(x_1,x_2,\dots,x_d)\in U_n\,:\, \exists\, 1\leq i\leq d, \,\text{s.t.}\, x_i=1\,\, \text{or}\,\, x_i=n-1 \Big\}.$$

 The dynamics we consider in this work is the superposition of two dynamics, the bulk dynamics and the boundary dynamics. The bulk dynamics is just the symmetric, simple exclusion process on $U_n$ with reflecting boundary conditions. It is the Markov process on $\Omega_n=\{0,1\}^{U_n}$ whose generator, denoted by $L_{b,n}$, is given by
\begin{equation}
(L_{b,n}f)(\eta)\;=\; \sum_{\substack{|x-y|=1\\x,y\in U_n}}\{f(\sigma^{x,y}\eta)-f(\eta)\}\;.
\end{equation}
In this formula and below, the configurations of $\Omega_n$ are represented by the Greek letters $\eta$, $\xi$, so that $\eta(x)=1$ if site $x\in U_n$ is occupied for the configuration $\eta$ and $\eta(x)=0$ otherwise. The symbol $|\cdot|$ represents the sum norm which is the extension of absolute value to high dimension (so we adopt the same symbol):
$$ |z|\,=\,\sum_{i=1}^d |z_i|.$$
 The symbol $\sigma^{x,y}\eta$ represents the configuration obtained from $\eta$ by exchanging the occupation
variables $\eta(x)$, $\eta(y)$:
\begin{equation*}
(\sigma^{x,y}\eta)(z)=
\begin{cases}
\eta(y) & \mbox{ if } z=x\\
\eta(x) & \mbox{ if } z=y\\
\eta(z) & \mbox{ if } z\in U_n\setminus \{x,y\}\;. 
\end{cases}
\end{equation*}

The boundary dynamics on $\Gamma_n$ are non-conservative.
We start by fixing $\theta\geq 0$ which we call the \textbf{slow parameter}, $c>0$ and a function $g:  \Gamma \to \bb (0,1)$. For each $u\in U$, let $[u]$ be the point at the boundary $\Gamma$ with minimum distance to $u$. If there is more than one such points, we choose any one of them. For each site $z\in \Gamma_n$, if it is occupied, then with rate $cn^{-\theta}(1-g([z/n]))$ the particle of this site is eliminated; if it is empty, then with rate $cn^{-\theta}g([z/n])$, one particle is created at this site. This dynamics is a Markov process with generator $L_{p,n}$ given by
$$(L_{p,n})f(\eta)=cn^{-\theta}\sum_{z\in\Gamma_n}[g([z/n])(1-\eta(z))+(1-g([z/n]))\eta(z)][f(\sigma^z\eta)-f(\eta)]$$
where $\sigma^z\eta$, $z\in \Gamma_n$, is the configuration obtained
from $\eta$ by flipping the occupation variable $\eta(z)$,
\begin{equation*}
(\sigma^z\eta)(x)=\begin{cases}
1-\eta(z) & \mbox{ if } x=z\\
\eta(x) & \mbox{ if } x\in \Gamma_n\setminus \{z\}\;. 
\end{cases}
\end{equation*}

For each $n\in\bb N$, denote by $\{\eta_t^n : t\ge 0\}$ the  Markov process with generator $n^2 L_n=n^2L_{b,n}+n^2L_{p,n}$. Here we accelerate the process by $n^2$ because the evolution of the density in the macroscopic level is observed on the diffusive timescale. Note that the invariant measure of the Markov process $\{\eta_t^n : t\ge 0\}$ can not be written down explicitly in general, except for some special choice of $g$. This causes the main difficulty for deriving the hydrodynamic limit.

\subsection{A  key mapping and the reference measure}
Assume that the process $\{\eta_t^n: t\geq 0 \}$ starts from an initial distribution $\mu^n$. Fix $T>0$. Denote by $\mu^n_t$ the distribution of the speed-up process at time $t$, for all $t\in(0,T]$. Given a function $\rho(\cdot):U\to \bb [0,1]$,  we denote by $\nu_\rho^n$ the Bernoulli product measure with slowly varying parameter associated to the profile $\rho(\cdot)$:
\begin{equation}\label{defnu}
\nu_{\rho(\cdot)}^n\{\eta:\eta(x)=1\}\,=\,\rho(x/n), \,\,\,\,x\in  U_n.
\end{equation}

We will apply the celebrated Yau's relative entropy method to derive the hydrodynamic limit. One of the main steps is to choose a proper time-dependent reference measure $\nu_t^n$ which is close to $\mu_t^n$,  in the sense of relative entropy. A popular choice is $\nu_t^n=\nu^n_{\rho_t(\cdot)}$, where $\rho_t(\cdot)$ is the solution of the expected hydrodynamic equation of the particles system. However this choice turns out to be not good enough to reach the desired bound of the relative entropy in our model. A modification to that choice is needed according to the slow parameter $\theta$. To explain our choice of the reference measure $\nu_t^n$, we introduce the following mapping. 

We need to define some sets first. Define
$$\Gamma_n^{i,+}\,:=\,\Big\{(x_1,x_2,\cdots,x_d)\in\Gamma_n\,:\, x_i=1\Big\},$$
$$\Gamma_n^{i,-}\,:=\,\Big\{(x_1,x_2,\cdots,x_d)\in\Gamma_n\,:\, x_i=n-1\Big\},$$
and
$$\Gamma_n^i\,:=\,\Gamma_n^{i,+}\cup\Gamma_n^{i,-}.$$
It is obvious that $\Gamma_n=\cup_{i=1}^d \Gamma_n^i$. We also need to define the following set which can be thought as the closure in $\bb Z^d$ of $U_n$:
$$\overline{U}_n\,:=\,\Big\{x\,=\,(x_1,x_2,\dots,x_d)\in \bb Z^d\,:0\leq x_i\leq n\,,\,\forall\,1\leq i\leq d\Big\}.$$ 
Recall that $\Gamma$ is the boundary of set $U$.  Let $\overline{U}$ be the closure of $U$:
$\overline{U}:=U\cup \Gamma$.

There is a natural partial order $\prec$ on $\bb R^d$: given $u^1,u^2\in \bb R^d$,  $u^1\prec u^2$ if and only if, $u^1_i\leq u^2_i$ for every $1\leq i\leq d$ and the strict inequality achieves for at least one coordinate $i$. Now we can describe the key mapping that will be used to define our reference measure $\nu_t^n$. For each $n\in\bb N$, it is a mapping from $\overline{U}_n$ to $\overline{U}$. Let us denote it by $m_n$. Firstly, $m_n$ preserves the partial order $\prec$: if $x,y\in \overline{U}_n$ such that $x\prec y$, then $m_n(x)\prec m_n(y)$. Moreover, $m_n$ satisfies the following additional conditions. Let $\{e_i: 1\leq i\leq d\}$ be the collection of canonical basis of $\bb R^d$.
If $\theta\in[0,1)$, then\,:
\begin{enumerate}
\item  For every $x\in \overline{U}_n\backslash U_n$,  \,$m_n(x)= \frac{x}{n}$.
\item  For every $x\in\Gamma_n^i$, $1\leq i\leq d$, if $x_i=n-1$, then 
$$m_n(x+e_i)\,-\,m_n(x)\,=\,c^{-1}\, n^{\theta-1};$$  
 if $x_i=1$, then 
$$m_n(x)\,-\,m_n(x-e_i)\,=\,c^{-1} \,n^{\theta-1}.$$ 
\item  For every $x\in U_n\backslash\Gamma_n$, for every $1\leq i\leq d$,
$$m_n(x)\,-\,m_n(x-e_i)\,=\,m_n(x+e_i)\,-\,m_n(x)\,=\,\frac{1-2c^{-1}n^{\theta-1}}{n-2}\,.$$
\end{enumerate}
If $\theta\geq1$, we just let $m_n(x):=\frac{x}{n}$ for all $x\in \overline{U}_n$. It is not hard to see that for each $\theta\geq 0$, $m_n$ is uniquely determined by the above conditions. Actually it is also possible to compute the explicit value of $m_n(x)$. Since only the relations above are needed in our proof, we are not going to compute the explicit expression of $g_n$.

Now let us define the time-dependent reference measure $\nu_t^n$, $0\leq t\leq T$. It is a Bernoulli product measure with density $\rho(t,m_n(x))$ at site $x\in U_n$:
\begin{equation}\label{nu}
\nu_t^n(\eta)=\prod_{x\in U_n}\Big\{\eta(x)\rho(t,m_n(x))\,+\,(1-\eta(x))(1-\rho(t,m_n(x)))\Big\}.
\end{equation}
 In the last formula, $\rho(t,\cdot)$ is the hydrodynamic limit equation that will be given in the next subsection. Note that for the case $\theta\geq1$, $\nu_t^n$ is exactly the popular choice $\nu^n_{\rho_t(\cdot)}$, by definition of $m_n$. However, for the case $\theta\in[0,1]$, the distance $m_n(x)$ with $x\in\Gamma_n$ to $\Gamma$ is no longer $1/n$, but is adjusted accordingly. This plays a crucial role in the proof of Lemma \ref{order}.

\subsection{Hydrodynamic limit equations}\label{hle}

To describe the hydrodynamic limit equation, the boundary $\Gamma$ needs to be treated carefully piece by piece. For each $1\leq i\leq d$, let $\Gamma^{i,+}$ ($\Gamma^{i,-}$, respectively) be the hyperplane on which the $i-$coordinate of sites are equal to $0$ ($1$, respectively):
$$\Gamma^{i,+}\,:=\,\Big\{(r_1,r_2,\cdots,r_d)\in\Gamma\,:\, r_i=0\Big\}.$$
Let $\Gamma^i$ be the union of these two disjoint sets $\Gamma^{i,+}$ and $\Gamma^{i,-}$. It is clear that 
$$\Gamma\,=\,\cup_{1\leq i\leq d} \,\Gamma^i.$$
Note that for different $i$ and $j$, $\Gamma_i$ and $\Gamma_j$ are not necessarily disjoint.

Like the one-dimensional case considered in \cite{bmns}, the hydrodynamic limit equation of our model depends on the slow parameter $\theta$: 
\begin{itemize}

\item if $\theta\in[0,1),$
\begin{equation}
\label{<1}
\begin{cases}
\partial_t\rho(t,u)=\Delta \rho(t,u), \quad u\in U \\
\rho(t,u)=g(u), \quad u\in\Gamma \\
\rho(0,\cdot)=\rho_0(\cdot), \quad u\in U \,
\end{cases}
\end{equation}

\item if $\theta=1$,
\begin{equation}
\label{=1}
\begin{cases}
\partial_t\rho(t,u)=\Delta \rho(t,u) \quad u\in U  \\
\rho(0,\cdot)=\rho_0(\cdot),\quad u\in U \,\\
\partial^-_{u_i}\rho(t,u)=c\,\big(g(u)-\rho(t,u)\big), \quad u\in\Gamma^{i,-}, \quad 1\leq i\leq d\\
\partial^+_{u_i}\rho(t,u)=c\,\big(\rho(t,u)-g(u)\big), \quad u\in\Gamma^{i,+},\quad 1\leq i\leq d
\end{cases}
\end{equation}

\item if $\theta>1$,
\begin{equation}
\label{>1}
\begin{cases}
\partial_t\rho(t,u)=\Delta \rho (t,u),\quad u\in U \,, \\
\rho(0,\cdot)=\rho_0(\cdot),\quad u\in U \,\\
\partial^+_{u_i}\rho(t,u)=\,0, \quad u\in\Gamma^{i,+}, \quad 1\leq i\leq d\\
\partial^-_{u_i}\rho(t,u)=\,0, \quad u\in\Gamma^{i,-},\quad 1\leq i\leq d

\end{cases}
\end{equation}

\end{itemize}

In order to gaurantee the solution $\rho(t,\cdot)$ has good properties, we assume that 

\begin{enumerate}
\item The initial profile $\rho_0$ is smooth on the closed set $\overline{U}$:
\begin{equation*}
\rho_0\in C^\infty(\overline{U}).
\end{equation*}
Moreover, there exists an open set $V\subset\bb R^d$ covering $\Gamma$ and an extension $\widetilde{g}$ of $g$ such that $\widetilde{g}$ is smooth on $V$.

\item 
There exists $\varepsilon_0\in (0,\frac{1}{2})$ such that both $\rho_0$ and $g$ take value in $(\varepsilon_0,1-\varepsilon_0)$. 

\item
$g$ and $\rho_0$ satisfy the compatibility condition of the PDE at $t=0$. Namely, if $\theta\in[0,1)$, $\rho_0(u)=g(u)$ for all $u\in\Gamma$. if $\theta=0$, $\partial^\pm_{u_i}\rho_0(u)=\pm c\big(\rho(t,u)-g(u)\big)$ for every $u\in\Gamma^{i,\pm}$ and every $1\leq i\leq d$. If $\theta>1$, $\partial^\pm_{u_i}\rho_0(u)=0$ for every $u\in\Gamma^{i,\pm}$ and every $1\leq i\leq d$.

\end{enumerate}

In this work, we need to deal with the differentiability of functions on some hypercube $V$, which could be open, closed or neither open nor closed. Suppose $V\in\bb R^d$. Given a function $f:V\to\bb R$, define the partial derivative $\partial_{u_i}f$ at $u\in V$ by:
$$\partial_{u_i}f(u)\,=\,\lim_{\substack{h\to0\\u+he_i\in V}}\frac{f(u+he_i)\,-\,h(u)}{h}.$$
Note that if $u$ is not contained in the interior of $V$, for some $i$, $h$ can only choose to be either positive or negative in order to guarantee $u+he_i\in V$. With this definition, a function $f$ is said to be $C^1$ continuous on $V$ if for every $1\leq i\leq d$, $\partial_{u_i}f$ exists everywhere and is continuous on $V$. In the same manner, we could define $C^k$ continuous functions on $V$ for integers $k\geq 1$ inductively and then the smooth functions.

\subsection{Main results}
 Denote by $\pi_t^n$ the empirical measure on $U$ associated to the configuration $\eta_t$:
$$\pi_t^n(du)= \pi^n(\eta_t,du):=\frac{1}{|U_n|}\sum_{x \in U_n}\eta_t(x)\delta_{x/N}(du)$$
where $\delta_x$ represents the Dirac mass at $x$. Throughout this article, for a set $A\subset \bb Z^d$, $|A|$ always represents the total number of sites contained in $A$. Given a continuous function $H:U \to \mathbb{R}$, denote by $\langle \pi_t^n,H \rangle$ the integral of $H$ with respect to $\pi_t^n$:
$$\langle \pi_t^n,H \rangle\,:=\,\frac{1}{|U_n|}\sum_{x \in U_n}H\big(\frac{x}{n}\big)\eta_t(x) $$.

Given two probability measures $\mu$ and $\pi$ on the same probability space $E$ such that, $\mu$ is absolutely continuous with respect to $\pi$, the relative entropy of $\mu$ with respect to $\pi$, denoted by $H(\mu|\pi)$, is defined by 
$$H(\mu|\pi)\,=\,\int_E\frac{d\mu}{d\pi}\log\frac{d\mu}{d\pi}d\pi,$$
where $\frac{d\mu}{d\pi}$ represents the Radon-Nikodym derivative of $\mu$ with respect to $\pi$.

 Now we are ready to state our first main result.
\begin{lemma}\label{lemma}
Assume that $\rho_0$ and $g$ satisfy assumptions (1)(2)(3) made in subsection \ref{hle} and recall the definition of $\nu_t^n$ given in \ref{nu}. Assume also that the relative entropy of the initial distribution $\mu_n$ with respect to the measure $\nu_0^n$ is of order $o(n^d)$:
\begin{equation}\label{initial}
H(\mu^n|\nu_0^n)\,=\,o(n^d).
\end{equation}
Then the relative entropy of the state of the process at time $t$ with respect to $\nu_t^n$ is also of order $o(n^d)$:
\begin{equation}\label{end}
H(\mu^n_t|\nu_t^n)\,=\,o(n^d).
\end{equation}

\end{lemma}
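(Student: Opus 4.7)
The plan is to apply Yau's relative entropy method in its standard form. Write $\psi_t^n$ for the density of $\nu_t^n$ with respect to counting measure on $\Omega_n$ and set $f_t^n := d\mu_t^n/d\nu_t^n$. Yau's inequality (see e.g.\ Kipnis--Landim, Ch.~6) gives
\begin{equation*}
\partial_t H(\mu_t^n|\nu_t^n) \;\leq\; \int \frac{1}{\psi_t^n}\Big[\, n^2 L_n^{*}\psi_t^n \;-\; \partial_t \psi_t^n \,\Big]\, d\mu_t^n,
\end{equation*}
where $L_n^{*}$ denotes the adjoint of $L_n$ with respect to counting measure. The goal is to bound the integrand by the sum of a term of order $H(\mu_t^n|\nu_t^n)$ and a term of order $o(n^d)$, uniformly in $t\in[0,T]$, so that Gronwall's inequality combined with the assumed bound \eqref{initial} yields \eqref{end}.

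The first step is the explicit computation of the integrand using the product structure of $\nu_t^n$ from \eqref{nu}. Setting $\rho_x(t):=\rho(t, m_n(x))$, each ratio $\psi_t^n(\sigma^{x,y}\eta)/\psi_t^n(\eta)$ and $\psi_t^n(\sigma^{z}\eta)/\psi_t^n(\eta)$ depends only on $\rho_x(t),\rho_y(t)$ (resp.\ $\rho_z(t)$) and the corresponding occupation variables, while $\partial_t\log\psi_t^n$ is a sum of $\partial_t\rho_x(t)$-type contributions. A Taylor expansion of the bulk part produces, to leading order, a linear-in-$\eta$ functional $\sum_{x\in U_n}(\eta(x)-\rho_x(t))\,G_n(t,x)$, where $G_n(t,\cdot)$ is a discrete Laplacian of $\log\!\big(\rho/(1-\rho)\big)$ composed with $m_n$, plus a quadratic error controlled by a Dirichlet form. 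The key cancellation comes from the heat equation $\partial_t\rho=\Delta\rho$: by item~(3) in the definition of $m_n$, the interior spacing is uniform and equal to $(1-2c^{-1}n^{\theta-1})/(n-2)=n^{-1}+O(n^{\theta-2})$, so the discrete Laplacian of $\rho(t,\cdot)\circ m_n$ agrees with $\Delta\rho$ up to $o(1)$ uniformly on compact subsets of $U$.

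The boundary contribution, where all dependence on $\theta$ lives, is the crux. At $z\in\Gamma_n$ the generator $n^2 L_{p,n}$ contributes a term of order $cn^{2-\theta}$ multiplying the discrepancy $g([z/n])-\rho(t,m_n(z))$, while the one-sided piece of the bulk discrete Laplacian at $z$ produces a difference quotient whose denominator is precisely the gap $c^{-1}n^{\theta-1}$ imposed in item~(2) of the definition of $m_n$. For $\theta\in[0,1)$, item~(1) forces $\rho(t,m_n(\xi))=\rho(t,\xi/n)=g(\xi/n)$ at the ghost sites $\xi\in\overline{U}_n\setminus U_n$, which is exactly the Dirichlet condition of \eqref{<1} and makes the leading boundary contribution vanish; for $\theta=1$ and $\theta>1$ the unmodified map $m_n(x)=x/n$ combined with the Robin and Neumann conditions in \eqref{=1} and \eqref{>1} produces the same cancellation, with the prefactor $c$ appearing correctly in the Robin regime. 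Sections~\ref{proof rel} and \ref{com} carry out these bookkeeping computations.

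Closing the Gronwall loop is the remaining task and, I expect, the main technical obstacle. The linear-in-$\eta$ functional $\sum_x(\eta(x)-\rho_x(t))G_n(t,x)$ must be controlled via the entropy inequality
\begin{equation*}
\int F\, d\mu \;\leq\; \gamma^{-1}\Big(H(\mu|\nu) + \log\!\int e^{\gamma F}\, d\nu\Big),
\end{equation*}
applied with an appropriate $\gamma=\gamma(n)$, and combined with sharp large-deviation bounds for sums of independent Bernoullis under the product measure $\nu_t^n$; the output is then absorbed either into a $CH(\mu_t^n|\nu_t^n)$ contribution or into an $o(n^d)$ remainder. The delicate point is the matching at the boundary, where the correct sign and the correct prefactor $c$ must appear simultaneously in all three regimes and where the Taylor remainders need to be controlled uniformly up to $\Gamma$; this is precisely where the explicit choice of $m_n$ in items~(1)--(3) is genuinely used, and where the smoothness and compatibility assumptions on $\rho_0$ and $g$ from subsection~\ref{hle} become essential.
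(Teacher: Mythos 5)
Your skeleton is the right one and matches the paper: Yau's inequality for $\partial_t H(\mu_t^n|\nu_t^n)$, explicit computation of $n^2L_n^*\Psi_t^n/\Psi_t^n-\partial_t\Psi_t^n/\Psi_t^n$ using the product structure, cancellation of the bulk linear term via $\partial_t\rho=\Delta\rho$, cancellation of the $O(n)$ boundary terms via the calibrated gaps in $m_n$ together with the Dirichlet/Robin/Neumann conditions, and Gronwall at the end. That part of the proposal is sound.

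The genuine gap is in how you propose to close the Gronwall loop: you have misallocated where the probabilistic work is needed. After the cancellations, the surviving linear-in-$\eta$ contributions are either identically zero (bulk, by the PDE) or bounded \emph{deterministically} by $o(n^d)$ (boundary, since the coefficients are $o(n)$ and $|\Gamma_n|=O(n^{d-1})$); no entropy inequality or large-deviation bound is needed for them, and indeed none would suffice — for a linear functional $\sum_x(\eta(x)-\rho_x)c_x$ with $c_x=O(1)$ the entropy inequality only yields $H(\mu_t^n|\nu_t^n)+O(n^d)$, not $o(n^d)$. What actually survives at order $O(n^d)$ and requires the probabilistic estimate is the \emph{quadratic} term coming from the second-order Taylor expansion of $\exp\{H(t,m_n(x))-H(t,m_n(x+z))\}$, namely $\sum_{x,i}(\partial_{u_i}H)^2\big[\tfrac12(\eta(x)+\eta(x+e_i)-2\eta(x)\eta(x+e_i))-\rho(1-\rho)-(1-2\rho)(\eta(x)-\rho)\big]$. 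You dismiss this as ``a quadratic error controlled by a Dirichlet form,'' but the version of Yau's inequality you (and the paper) state retains no Dirichlet form, so that control is unavailable. The paper instead coarse-grains $\eta(x)$ to the block average $\eta^\ell(x)$ (by summation by parts, using smoothness of $H$), recognizes the bracket as $M(\eta^\ell(x),\rho)$ with $M(\lambda,\rho)=\psi(\lambda)-\psi(\rho)-\psi'(\rho)(\lambda-\rho)$ for $\psi(\lambda)=\lambda(1-\lambda)$, and applies the entropy inequality, H\"older over translates, and the Laplace--Varadhan lemma; the whole argument hinges on the concavity of $\psi$, which forces $M\le0$ and hence $\sup_\lambda\{C\,M(\lambda,\rho)-I_\rho(\lambda)\}=0$, so the exponential moment contributes nothing and the term is absorbed into $H(\mu_t^n|\nu_t^n)+o(n^d)$. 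This one-block/large-deviations step, and the sign structure of $M$ that makes it work, is the missing idea in your proposal.
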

The proof of this lemma is given in Section \ref{proof rel}. We do not claim the bound in \eqref{end} is sharp when the initial bound in \eqref{initial} is small enough. It should be possible to apply the method introduced in the remarkable work of M. Jara and O.Menezes \cite{jm} to improve the bound. Nevertheless, the estimate of the relative entropy in the lemma is already good enough to derive the hydrodynamic limit of our model.

\begin{theorem}\label{HDL}
Under assumptions of the previous lemma, for every continuous function $G: U\to\bb R$,
\begin{equation}
\lim_{n\to\infty} E_{\mu_t^n}\Big[\Big|\langle \pi_t^n,G\rangle\,-\,\int_U G(u)\rho(t,u)du\Big|\Big]\,=\,0
\end{equation}
where $\rho(t,\cdot)$ is the PDE given in the previous subsection \ref{hle}.

\end{theorem}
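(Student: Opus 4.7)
The plan is to derive Theorem \ref{HDL} from Lemma \ref{lemma} via the standard entropy inequality, exploiting the fact that the reference measure $\nu_t^n$ is a Bernoulli product measure, so that under $\nu_t^n$ the functional $\<\pi_t^n,G\>$ concentrates sharply around a Riemann sum that itself converges to $\int_U G(u)\rho(t,u)\,du$. Concretely, fix a continuous test function $G$ and $\delta > 0$, and set
$$A_\delta^n \,=\, \Bigl\{\eta \,:\, \bigl|\<\pi^n(\eta,\cdot),G\>\,-\,\int_U G(u)\rho(t,u)\,du\bigr| \,>\, \delta\Bigr\}.$$
The classical entropy inequality gives
$$\mu_t^n(A_\delta^n) \,\leq\, \frac{\log 2 \,+\, H(\mu_t^n|\nu_t^n)}{\log\bigl(1+\nu_t^n(A_\delta^n)^{-1}\bigr)},$$
so by Lemma \ref{lemma} the claim will follow as soon as I show that $\nu_t^n(A_\delta^n) \leq 2\exp(-c_\delta\,n^d)$ for some $c_\delta > 0$ and all large $n$.

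To obtain this large-deviation bound I first identify the mean under $\nu_t^n$. By the product structure \eqref{nu},
$$E_{\nu_t^n}\bigl[\<\pi_t^n,G\>\bigr] \,=\, \frac{1}{|U_n|}\sum_{x\in U_n} G(x/n)\,\rho(t,m_n(x)).$$
From the defining conditions of $m_n$, one reads off that $\max_{x\in \overline{U}_n}|m_n(x) - x/n| = O(n^{\theta-1})$ when $\theta\in[0,1)$ and that $m_n(x) = x/n$ identically when $\theta\geq 1$. Hence, using the continuity of $\rho(t,\cdot)$ on $\overline U$ (to be established in Section \ref{sp}) and of $G$, the right-hand side is a perturbed Riemann sum that converges to $\int_U G(u)\rho(t,u)\,du$. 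Since, under $\nu_t^n$, the random variable $\<\pi_t^n,G\>$ is a sum of $|U_n|$ independent random variables each of magnitude at most $\|G\|_\infty/|U_n|$, Hoeffding's inequality yields
$$\nu_t^n\Bigl(\bigl|\<\pi_t^n,G\> - E_{\nu_t^n}[\<\pi_t^n,G\>]\bigr| > \delta/2\Bigr) \,\leq\, 2\exp\Bigl(-\frac{\delta^2\,|U_n|}{2\|G\|_\infty^2}\Bigr).$$
Combining with the convergence of the mean gives $\nu_t^n(A_\delta^n) \leq 2\exp(-c_\delta\,n^d)$ for all sufficiently large $n$.

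Feeding this bound into the entropy inequality, $\mu_t^n(A_\delta^n) \to 0$ for every $\delta > 0$. Since $|\<\pi_t^n,G\>| \leq \|G\|_\infty$ pointwise, splitting the expectation on $A_\delta^n$ and its complement yields
$$E_{\mu_t^n}\Bigl[\bigl|\<\pi_t^n,G\>\,-\,\int_U G(u)\rho(t,u)\,du\bigr|\Bigr] \,\leq\, \delta \,+\, 2\|G\|_\infty\,\mu_t^n(A_\delta^n),$$
and letting $n\to\infty$ then $\delta\to 0$ concludes the proof. The main obstacle in this argument is not in Theorem \ref{HDL} itself but in Lemma \ref{lemma}; once the $o(n^d)$ entropy bound is granted, the only delicate point above is the Riemann-sum convergence of $E_{\nu_t^n}[\<\pi_t^n,G\>]$, where one must verify that replacing $m_n(x)$ with $x/n$ introduces an error that is uniform in $x\in U_n$ and vanishes with $n$. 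This in turn rests on the explicit bound $|m_n(x)-x/n|=O(n^{\theta-1})$ extracted from the definition of $m_n$ in the case $\theta\in[0,1)$, together with the uniform continuity of $\rho(t,\cdot)$ on the closed hypercube $\overline U$.
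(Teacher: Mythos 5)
Your proposal is correct and follows essentially the same route as the paper: replace the integral by the Riemann sum $\tfrac{1}{|U_n|}\sum_{x}G(x/n)\rho(t,m_n(x))$ using $|m_n(x)-x/n|=o(1)$, then combine the entropy inequality with respect to $\nu_t^n$, a Hoeffding-type concentration bound under the product measure, and the $o(n^d)$ entropy estimate of Lemma \ref{lemma}. The only difference is cosmetic: the paper applies the entropy inequality to the exponential moment of the centered sum (subgaussian bound, then optimizes over $\gamma$, yielding the quantitative rate $(\|G\|_\infty+1)\sqrt{H(\mu_t^n|\nu_t^n)/|U_n|}$), whereas you apply it to the indicator of the deviation event together with a large-deviation bound, which gives the same conclusion.
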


Before proving this theorem, let us recall the concept of subgaussian variable which will be used in the proof. We say that a real-valued random variable $X$ is subgaussian of order $\sigma^2$, if for every $\theta\in \bb R$,
$$\log E[e^{\theta X}]\,\leq\,\frac{1}{2}\sigma^2\theta^2.$$

\begin{proof}[Proof of Theorem \ref{HDL}]
Note that by definition of $m_n$, for every $x,y\in U_n$ such that $|x-y|=1$, we have $|m_n(x)-m_n(y)|=o(1)$. Since both $G$ and $\rho(t,\cdot)$ are continuous, 
$$\lim_{n\to\infty}\frac{1}{|U_n|}\sum_{x\in U_n}G\big(\frac{x}{n}\big)\rho(t,m_n(x))\,=\,\int_U G(u)\rho(t,u)du.$$
Therefore to prove the theorem, it is enough to show that 
\begin{equation}
\lim_{n\to\infty} E_{\mu_t^n}\Big[\Big|\frac{1}{|U_n|}\sum_{x\in U_n}G\big(\frac{x}{n}\big)\big[\eta_t(x)\,-\,\rho(t,m_n(x))\big]\Big|\Big]\,=\,0.
\end{equation}
Applying the entropy inequality, for every $\gamma>0$, the expectation in the previous formula is bounded from above by 
\begin{equation}\label{entropy inequality}
\frac{1}{\gamma |U_n|}\left\{H(\mu_t^n|\nu_t^n)\,+\,\log E_{\nu_t^n}\Big[exp\Big|\big\{\gamma \sum_{x\in U_n}G\big(\frac{x}{n}\big)\big[\eta_t(x)\,-\,\rho(t,m_n(x))\big]\Big|\big\}\Big]\right\}.
\end{equation}
Since $e^{|x|}\leq e^x+e^{-x}$ and 
$$\limsup_{n}\frac{1}{n}\log(a_n+b_n)\,=\,\max\big\{\limsup_n \frac{\log a_n}{n}\,,\,\limsup_n\frac{\log b_n}{n}\big\},$$
 we can remove the absolute value symbol inside the exponential.

By Hoeffding's lemma(see Lemma F.9 in \cite{jm}, for instance), since $\eta$ takes value in $[0,1]$ and $\rho\in(\varepsilon_0,1-\varepsilon_0)$ by Corollary \ref{col}, under the product measure  $\nu_t^n$, $\eta_t(x)-\rho(t,m_n(x))$ is subgaussian of order $\frac{1}{4}$. By Lemma F.12 in \cite{jm}, $\sum_{x\in U_n}G(\frac{x}{n})\big[\eta_t(x)\,-\,\rho(t,m_n(x))\big]$ is subgaussian of order $\sum_{x\in U_n}\frac{G(x/n)^2}{4}$. Therefore, by definition of the subgaussian variable, the expression in \eqref{entropy inequality} is bounded by
$$\frac{1}{\gamma |U_n|}H(\mu_t^n|\nu^n_t)\,+\,\frac{\gamma}{ |U_n|}\sum_{x\in U_n}\frac{G(x/n)^2}{8}.$$
Choosing $\gamma=\sqrt{\frac{H(\mu_t^n|\nu^n_t)}{|U_n|}}$, we have that
\begin{equation}\label{repbound}
E_{\mu_t^n}\Big[\Big|\frac{1}{|U_n|}\sum_{x\in U_n}G(\frac{x}{n})\big[\eta_t(x)\,-\,\rho(t,m_n(x))\big]\Big|\Big]\,\leq\, \big(\|G\|_\infty+1\big)\sqrt{\frac{H(\mu_t^n|\nu^n_t)}{|U_n|}}
\end{equation}
To conclude the proof of this theorem, it remains to recall \eqref{end} .
\end{proof}
\begin{remark}
The one-dimensional case of this problem was solved by R.Baldasso et.al in \cite{bmns}, by applying the Varadhan's entropy method. However, that method does not work here. The main reason is that for any product measure $\nu^n$ and density function $f$, since $\nu^n$ is not invariant, $n^2\<L_nf,f\>_{\nu^n}$ can only be bounded by a positive term of order $O(n^d)$, which is not enough to prove the local replacement lemma if $d\geq 2$. Without the local replacement lemma, one cannot characterize the boundary behaviour of the dynamics in the limit as $n\to\infty$. Nevertheless, when the invariant measure is product, the Varadhan's entropy method still works for deriving the hydrodynamic limit of high dimensional models. This is the case of the model in \cite{ft}, where the authors study the SSEP with a slow membrane in dimension $d\geq 2$. 
\end{remark}

\section{Relative entropy estimate}\label{proof rel}

In this section we give the proof of Lemma \ref{lemma}. The idea is  to show that the time derivative of the relative entropy of $\mu_t^n$ with respect to $\nu^n_t$, is bounded by a term of order $o(n^d)$ plus $H(\mu^n_t|\nu_t^n)$. Once this is proved, Lemma \ref{lemma} follows immediately from the Gronwall lemma and  assumption \eqref{initial}.

Fix a reference measure $\nu^n$ on the state space $\Omega_n$. Recall our choice of $\nu_t^n$ in \eqref{nu}. Let $f_t^n$ be the Radon-Nikodym derivative of $\mu_t^n$ with respect to $\nu^n$:
$$f_t^n\,:=\,\frac{d\mu_t^n}{d\nu^n},$$
and let $\Psi_t^n$ be the Radon-Nikodym derivative of $\nu^n_t$ with respect to $\nu^n$
 $$\Psi_t^n\,:=\,\frac{d\nu^n_t}{d\nu^n}.$$
The lemma below gives an upper bound for the time derivation of the relative entropy $ H'(\mu^n_t|\nu_t^n)$ in terms of $f_t^n$ and $\Psi_t^n$.
\begin{lemma}\label{deriv}
For every $t>0$,
$$ H'(\mu^n_t|\nu_t^n)\,\leq\, \int \frac{1}{\Psi_t^n}\{n^2L^*_n\Psi_t^n-\partial_t \Psi_t^n\} f_t^n d\nu^n,$$
where $L^\star_n$ is the adjoint generator of $L_n$ with respect to the measure $\nu^n$.
\end{lemma}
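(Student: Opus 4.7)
The plan is to carry out a standard computation in Yau's relative entropy method, namely differentiate the entropy in time, substitute the forward Kolmogorov equation for $f_t^n$, and then use the pointwise inequality $\log x \le x-1$ together with the adjoint relation in $L^2(\nu^n)$ to replace $L^*_n f_t^n$ by $L^*_n \Psi_t^n$ on the right-hand side.

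First I would write the entropy explicitly as
\[
H(\mu^n_t|\nu^n_t)\;=\;\int f_t^n\,\log\!\big(f_t^n/\Psi_t^n\big)\,d\nu^n
\]
and differentiate in $t$. Since $f_t^n$ is a probability density with respect to $\nu^n$, the boundary term $\int \partial_t f_t^n\,d\nu^n$ vanishes. Using the forward equation $\partial_t f_t^n = n^2 L^*_n f_t^n$ (with $L^*_n$ the adjoint in $L^2(\nu^n)$), this yields
\[
H'(\mu^n_t|\nu^n_t)\;=\;n^2\!\int (L^*_n f_t^n)\,\log\!\big(f_t^n/\Psi_t^n\big)\,d\nu^n\;-\;\int\frac{f_t^n}{\Psi_t^n}\,\partial_t\Psi_t^n\,d\nu^n.
\]
The second term already matches the statement, so the whole task reduces to showing
\[
\int (L^*_n f_t^n)\,\log\!\big(f_t^n/\Psi_t^n\big)\,d\nu^n\;\le\;\int (L^*_n \Psi_t^n)\,\frac{f_t^n}{\Psi_t^n}\,d\nu^n.
\]

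To prove this inequality I would first swap adjoints and write the left side as $\int f_t^n\,L_n\log(f_t^n/\Psi_t^n)\,d\nu^n$. Since $L_n = L_{b,n}+L_{p,n}$ is a jump generator of the form $(L_n h)(\eta)=\sum_{\eta'} c_n(\eta,\eta')\{h(\eta')-h(\eta)\}$ with $c_n(\eta,\eta')\ge 0$, setting $g_t^n=f_t^n/\Psi_t^n$ we have pointwise
\[
L_n\log g_t^n(\eta)\;=\;\sum_{\eta'} c_n(\eta,\eta')\,\log\!\left(\frac{g_t^n(\eta')}{g_t^n(\eta)}\right)\;\le\;\sum_{\eta'} c_n(\eta,\eta')\left(\frac{g_t^n(\eta')}{g_t^n(\eta)}-1\right)\;=\;\frac{L_n g_t^n(\eta)}{g_t^n(\eta)},
\]
where the nonnegativity of the rates is essential to preserve the direction of the bound $\log x\le x-1$. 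Multiplying by $f_t^n=g_t^n\Psi_t^n$ and integrating,
\[
\int f_t^n\,L_n\log g_t^n\,d\nu^n\;\le\;\int \Psi_t^n\,L_n g_t^n\,d\nu^n\;=\;\int (L^*_n\Psi_t^n)\,g_t^n\,d\nu^n,
\]
where the last equality is one more application of the adjoint in $L^2(\nu^n)$. Combining this with the earlier expression gives the claimed bound.

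The computation is essentially routine, so there is no real obstacle; the only point requiring mild care is a justification of the exchange of differentiation and integration and of the two uses of the adjoint, which is valid because the state space $\Omega_n$ is finite (so everything is a finite sum) and $f_t^n,\Psi_t^n>0$ for all $t\in[0,T]$ since the process is irreducible on a finite space and $\Psi_t^n$ is a product of Bernoullis with parameter in $(\varepsilon_0,1-\varepsilon_0)$ by assumption~(2) and the maximum principle for \eqref{<1}--\eqref{>1}.
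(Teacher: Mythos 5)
Your proof is correct and is essentially the classical argument: the paper does not prove this lemma itself but cites Lemma 1.4 of Chapter 6 in Kipnis--Landim, and your computation (differentiate the entropy, use $\int \partial_t f_t^n\,d\nu^n=0$ and the forward equation, then pass $L_n$ onto $\log(f_t^n/\Psi_t^n)$ and apply $\log x\le x-1$ jumpwise before transferring the adjoint back onto $\Psi_t^n$) is precisely that standard proof. The points you flag at the end -- finiteness of $\Omega_n$, strict positivity of $\Psi_t^n$, and positivity of $f_t^n$ for $t>0$ by irreducibility -- are exactly the right ones to justify the two adjoint swaps and the use of the logarithm.
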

This is a classical result, which can be found in Chapter 6 of the book \cite{kl}, as Lemma 1.4.
It is worthwhile to point out that the right hand side of the inequality in the previous lemma does not depend on the choice of the reference measure $\nu^n$. However a good choice of $\nu^n$ would simplify the computation. Here we choose $\nu^n$ to be the Bernoulli product measure with slowly varying parameter associated to the constant profile $1/2$:
$$\nu^n(\eta)=\prod_{x\in U_n}\Big\{\eta(x)\frac{1}{2}\,+\,(1-\eta(x))\frac{1}{2}\Big\}.$$
In other words, $\nu^n$ is the uniform measure on $\Omega_n$.

Now we just need to bound the right hand side of the inequality in Lemma \ref{deriv} by the sum of a term of order $o(n^d)$ and $H(\mu^n_t|\nu_t^n)$.
\begin{lemma}\label{order}
For the choice of $\nu_t^n$ given in \eqref{nu}, there exists a constant $C$ independent of $n$, such that for every $t\in [0,T]$, 
$$\int \frac{1}{\Psi_t^n}\{n^2L^*_n\Psi_t^n-\partial_t \Psi_t^n\} f_t^n d\nu^n\,\leq\, o(n^d)\,+\,H(\mu_t^n|\nu^n_t).$$
where $L^*_n$ is the adjoint generator of $L_n$ w.r.t. $\nu^n.$
\end{lemma}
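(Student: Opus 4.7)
The plan is to apply Yau's relative entropy method in its standard form: compute the integrand on the left-hand side explicitly and show that, after Taylor expansion and discrete summation by parts, its leading-order terms cancel against the PDE satisfied by $\rho$, leaving a residual of order $o(n^d) + H(\mu_t^n|\nu_t^n)$.

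First I would decompose $L_n = L_{b,n} + L_{p,n}$. The uniform measure $\nu^n$ is reversible for $L_{b,n}$, so $L_{b,n}^\star = L_{b,n}$, while $L_{p,n}^\star$ is computed directly from the uniform detailed-balance formula. Writing $\rho_x := \rho(t, m_n(x))$ and $\phi(u) := \log[u/(1-u)]$, the product form of $\nu_t^n$ gives
$$\log\frac{\Psi_t^n(\sigma^{x,y}\eta)}{\Psi_t^n(\eta)} \;=\; (\eta(x)-\eta(y))\,[\phi(\rho_y)-\phi(\rho_x)],$$
and $e^u-1 = u + u^2/2 + O(u^3)$ followed by a discrete summation by parts produces a leading-order bulk contribution of the form $\sum_x (\eta(x)-\rho_x)\,\phi'(\rho_x)\,[\Delta_n\rho](t,m_n(x))$, where $\Delta_n$ is the discrete Laplacian whose spacings are those prescribed by $m_n$. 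The time-derivative term $\partial_t\Psi_t^n/\Psi_t^n$ produces the matching expression $\sum_x (\eta(x)-\rho_x)\,\phi'(\rho_x)\,\partial_t\rho(t,m_n(x))$, so the PDE $\partial_t\rho=\Delta\rho$ kills this bulk leading order up to an $o(1)$-per-site smoothness error.

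The crucial step is the boundary cancellation, which is the reason the mapping $m_n$ was designed as it was. For $\theta\in[0,1)$, condition (2) in the definition of $m_n$ sets the outward spacing at each $x\in\Gamma_n$ equal to $c^{-1}n^{\theta-1}$, so the ``outward'' one-sided piece of $n^2[\Delta_n\rho](t,m_n(x))$ at a boundary site carries a prefactor of order $cn^{1-\theta}$---exactly what is needed to balance the $cn^{-\theta}$ rate of $L_{p,n}$. The resulting $O(1)$ boundary contribution reduces to $\rho(t,[x/n]) - g([x/n])$, which vanishes by the Dirichlet condition in \eqref{<1}. For $\theta=1$ and $\theta>1$ the spacing is the standard $1/n$ and the same calculation reproduces the Robin and Neumann discrepancies of \eqref{=1} and \eqref{>1} respectively; in each case they vanish on the PDE. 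Thus the leading boundary contribution is zero in all three regimes.

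The remaining residues---the quadratic Taylor remainder $(\eta(x)-\eta(y))^2[\phi(\rho_y)-\phi(\rho_x)]^2$, higher-order terms in $e^u-1$, the discrepancy $\Delta_n\rho-\Delta\rho$, and analogous boundary leftovers---must be bounded by $o(n^d)+H(\mu_t^n|\nu_t^n)$. Linear-in-$\eta$ residues have zero mean under $\nu_t^n$ and are subgaussian because $\rho\in(\varepsilon_0,1-\varepsilon_0)$, so the entropy inequality
$$\int V f_t^n\, d\nu^n \;\leq\; \frac{1}{\gamma}\Big[H(\mu_t^n|\nu_t^n) + \log \int e^{\gamma V}\,d\nu_t^n\Big]$$
absorbs them into $H(\mu_t^n|\nu_t^n)+o(n^d)$ after optimizing $\gamma$; the quadratic residues are uniformly bounded using $\phi'(\rho)=1/[\rho(1-\rho)]\leq \varepsilon_0^{-2}$ together with the smoothness of $\rho$ on $\overline{U}$. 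The main obstacle is the boundary-layer analysis when $\theta\in[0,1)$, where individual boundary terms have apparent magnitude $n^{2}\cdot n^{-\theta}\cdot n^{d-1}=n^{d+1-\theta}\gg n^d$: the cancellation with the Dirichlet condition must be tracked at next-to-leading order, using both the smoothness of $\rho$ near $\Gamma$ (from Section \ref{sp}) and the precise geometric identities satisfied by $m_n$.
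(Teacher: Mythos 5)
Your architecture matches the paper's: decompose $L_n=L_{b,n}+L_{p,n}$, Taylor-expand the exponential of increments of $\phi(\rho)=\log[\rho/(1-\rho)]$, sum by parts, cancel the bulk against the heat equation, and cancel the $O(n^{d+1-\theta})$ boundary terms using the boundary conditions together with the anomalous spacing $c^{-1}n^{\theta-1}$ built into $m_n$. That last part you describe correctly and it is indeed the reason $m_n$ is defined as it is.

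The genuine gap is your treatment of the quadratic-in-$\eta$ residue. The second-order Taylor term is, per edge, $\tfrac{n^2}{2}(\eta(x)-\eta(y))^2[\phi(\rho_y)-\phi(\rho_x)]^2\approx \tfrac12\big(\eta(x)+\eta(y)-2\eta(x)\eta(y)\big)\big(\partial_{u_i}H\big)^2$, which is $O(1)$ per site; a uniform bound therefore gives only $O(n^d)$, not $o(n^d)$, and its mean under $\nu_t^n$ is itself of order $n^d$ (roughly $\sum_x\rho(1-\rho)(\partial_{u_i}H)^2$), so it is neither negligible nor mean-zero, and it is not linear in $\eta$, so the subgaussian/Hoeffding route does not apply to it either. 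Moreover, your claimed leading order for the generator, $\sum_x(\eta(x)-\rho_x)\phi'(\rho_x)\Delta_n\rho$, is not what the summation by parts actually produces: it produces $\sum_x(\eta(x)-\rho_x)\,\partial^2_{u_iu_i}H$, and the discrepancy $\phi'(\rho)\Delta\rho-\Delta H=(\partial H)^2(1-2\rho)$ contributes another $O(n^d)$ term through $\partial_t\Psi_t^n/\Psi_t^n$. The resolution, which is the one piece of the paper's proof missing from your plan, is that these two $O(n^d)$ pieces combine algebraically into $\sum_x(\partial_{u_i}H)^2\,M(\eta,\rho)$ with $M(a,b)=\psi(a)-\psi(b)-\psi'(b)(a-b)$, $\psi(a)=a(1-a)$; one then replaces $\eta(x)$ by the block average $\eta^\ell(x)$ (at cost $o_{n,\ell}(1)\,n^d$) and controls $\sum_x(\partial_{u_i}H)^2M(\eta^\ell(x),\rho)$ by the entropy inequality plus a Laplace--Varadhan large-deviations bound (the paper's Lemma \ref{LDP}), using that $\sup_\lambda\{C\,M(\lambda,\rho)-I_\rho(\lambda)\}=0$ because $M(\lambda,\rho)=-(\lambda-\rho)^2\le 0$. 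Without this one-block step your bound stalls at $O(n^d)$.
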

The proof of this lemma is postponed to the end of Section \ref{com}.

\section{Some computations}\label{com}
\subsection{Direct computation of $\frac{n^2L^*_n\Psi_t^n}{\Psi_t^n}$}
Since $\nu_t^n$ is a product measure, a simple computation gives that
$$\Psi_t^n(\eta)=\prod_{x\in U_n}\eta(x)\frac{\rho(t,m_n(x))}{1/2}+(1-\eta(x))\frac{1-\rho(t,m_n(x))}{1/2}.$$
Applying formula A.1 in \cite{jm}, for every function $h:\Omega_n\to\bb R$,
$$L_{b,n}^\star h(\eta)\,=\,\sum_{\substack{|x-y|=1\\x,y\in U_n}}[h(\eta^{x,y})-h(\eta)]$$
and
\begin{equation*}
\begin{split}
L_{p,n}^\star h(\eta)\,=\,cn^{-\theta}\sum_{x\in\Gamma_n}&\Big[\big\{g([x/n])\eta(x)+(1-g([x/n]))(1-\eta(x))\big\}\frac{d\nu^n(\sigma^x\eta)}{d\nu^n(\eta)}h(\sigma^x\eta)\,\\
&-\,\big\{g(x/n)(1-\eta(x))+(1-g(x/n))\eta(x)\big\}h(\eta)\Big],
\end{split}
\end{equation*}
where $L_{b,n}^\star$(resp. $L_{p,n}^\star$) is the adjoint operator of $L_{b,n}$(resp. $L_{p,n}$) with respect to the probability measure $\nu^n$.
For any configuration $\eta\in\Omega_n$ and $x\in U_n$, let us denote
$$P_x(\eta)=g([x/n])(1-\eta(x))+(1-g([x/n]))\eta(x).$$ 
With this notation and recalling that $L_n=L_{b,n}+L_{p,n}$,
\begin{equation}\label{Psi1}
\begin{split}
\frac{1}{\Psi_t^n}n^2L^*_n\Psi_t^n(\eta)\,=&\,n^2\sum_{\substack{|x-y|=1\\x,y\in U_n}}[\frac{\Psi_t^n(\eta^{x,y})}{\Psi_t^n(\eta)}-1]\\
+&\,cn^{2-\theta} \sum_{x\in \Gamma_n}[P_x(\sigma^x\eta)\frac{d\nu^n(\sigma^x\eta)}{d\nu^n(\eta)}\frac{\Psi_t^n(\sigma^x\eta)}{\Psi_t^n(\eta)}-P_x(\eta)].
\end{split}
\end{equation}

A simple computation shows that if $x,y\in U_n$ and $|x-y|=1$, then
\begin{equation*}
\begin{split}
\frac{\Psi_t^n(\eta^{x,y})}{\Psi_t^n(\eta)}\,&=\,\eta(x)\eta(y)+(1-\eta(x))(1-\eta(y))\\
&+\,\eta(x)(1-\eta(y))\frac{\big[1-\rho(t,m_n(x))\big]\rho(t,m_n(y))}{\rho(t,m_n(x))\big[1-\rho(t,m_n(y))\big]}\\
&+\,\eta(y)(1-\eta(x))\frac{\big[1-\rho(t,m_n(y))\big]\rho(t,m_n(x))}{\rho(t,m_n(y))\big[1-\rho(t,m_n(x))\big]}.
\end{split}
\end{equation*}
On the other hand, if $x\in \Gamma_n$, then 
$$\frac{\Psi_t^n(\sigma^x\eta)}{\Psi_t^n(\eta)}\,=\,\eta(x)\frac{1-\rho(t,m_n(x))}{\rho(t,m_n(x))}+(1-\eta(x))\frac{\rho(t,m_n(x))}{1-\rho(t,m_n(x))}.$$
Putting these two identities above into \eqref{Psi1}, we have that 
\begin{equation}\label{4sum}
\frac{1}{\Psi_t^n}n^2L^*_n\Psi_t^n(\eta)\,=\,A_n\,+\,B_n\,+\,C_n\,+\,D_n,
\end{equation}
where
$$A_n\,=\,\frac{n^2}{2}\sum_{x\in U_n}\sum_{\substack{|z|=1\\x+z\in U_n}}(1-\eta(x))\eta(x+z)\Big[\frac{\big[1-\rho(t,m_n(x+z))\big]\rho(t,m_n(x))}{\rho(t,m_n(x+z))\big[1-\rho(t,m_n(x))\big]}\,-\,1\Big],$$
$$B_n\,=\,\frac{n^2}{2}\sum_{x\in U_n}\sum_{\substack{|z|=1\\x+z\in U_n}}\eta(x)(1-\eta(x+z))\Big[\frac{\big[1-\rho(t,m_n(x))\big]\rho(t,m_n(x+z))}{\rho(t,m_n(x))\big[1-\rho(t,m_n(x+z))\big]}\,-\,1\Big],$$
$$C_n\,=\,cn^{2-\theta}\sum_{x\in\Gamma_n}\eta(x)\left[P_x(\sigma^x\eta)\frac{1-\rho(t,m_n(x))}{\rho(t,m_n(x))}-P_x(\eta)\right],$$
$$D_n\,=\,cn^{2-\theta}\sum_{x\in\Gamma_n}(1-\eta(x))\left[P_x(\sigma^x\eta)\frac{\rho(t,m_n(x))}{1-\rho(t,m_n(x))}-P_x(\eta)\right].$$
There is a factor $\frac{1}{2}$ in the expressions of $A_n$ and $B_n$ because each edge $(x,y)\subset U_n$ is counted twice if we count according to endpoints.

\subsection{Estimate of $\frac{n^2L^*_n\Psi_t^n}{\Psi_t^n}$}

For each $t\in[0,T]$, define the function $H(t,\cdot):\overline{U}\to\bb R$ by 
$$H(t,u):=\log \frac{\rho(t,u)}{1-\rho(t,u)}.$$
Note that $H(t,\cdot)$ is well defined and smooth on $U$ by Corollary \ref{col}.  It is easy to check that for every $u\in U$, for every $1\leq j \leq d$,
\begin{equation}\label{rel1}
\partial_{u_j} H(t,u)=\frac{\partial_{u_j}\rho(t,u)}{\rho(t,u)\big(1-\rho(t,u)\big)}
\end{equation}
and 
\begin{equation}\label{rel2}
\partial ^2_{u_ju_j}H(t,u)=\frac{\partial_{u_ju_j}^2\rho(t,u)}{\rho(t,u)\big(1-\rho(t,u)\big)}-\frac{(\partial_{u_j}\rho(t,u))^2\big(1-2\rho(t,u)\big)}{\rho(t,u)^2\big(1-\rho(t,u)\big)^2}.
\end{equation}
With those notation above, we can rewrite $A_n+B_n$ as 
\begin{equation*}
\begin{split}
&\frac{n^2}{2}\sum_{x\in U_n}\sum_{\substack{|z|=1\\ x+z\in U_n}}\eta(x+z)(1-\eta(x))\Big[\exp\Big\{H(t,m_n(x))-H(t,m_n(x+z))\Big\}-1\Big]\\
+\,&\frac{n^2}{2}\sum_{x\in U_n}\sum_{\substack{|z|=1\\ x+z\in U_n}}\eta(x)(1-\eta(x+z))\Big[\exp\Big\{H(t,m_n(x+z))-H(t,m_n(x))\Big\}-1\Big].
\end{split}
\end{equation*}
The next lemma gives an estimate of $A_n+B_n$ of the one-dimensional version.
\begin{lemma}\label{computation}
Fix a smooth function $\rho:[0,1]\to\bb (0,1)$. Define a function $H:[0,1]\to\bb R$ by $H(\cdot)=\log\frac{\rho(\cdot)}{1-\rho(\cdot)}$. For each $n\in\bb N$, consider a mapping 
$m_n:\{0,1,\cdots, n\}\to[0,1]$
such that $m_n(0)<m_n(1)<\cdots<m_n(n)$, 
\begin{equation}\label{mid}
\lim_{n\to\infty}n\big\{m_n(j+1)-m_n(j)\big\}=1 \quad \text{for every }\,\, 1\leq j\leq n-2.
\end{equation}
Then for integers $n$ sufficiently large, 
\begin{equation}\label{AB1}
\begin{split}
&n^2\sum_{j=1}^{n-2}\eta(j+1)(1-\eta(j))\left[\exp\Big\{H(m_n(j))\,-\,H(m_n(j+1))\Big\}\,-\,1\right]\\
+\,&n^2\sum_{j=1}^{n-2}\eta(j)(1-\eta(j+1))\left[\exp\Big\{H(m_n(j+1))\,-\,H(m_n(j))\Big\}\,-\,1\right]
\end{split}
\end{equation}
is equal to 
\begin{equation*}
\begin{split}
&n\Big\{[\eta(1)-\rho(0)]H'(m_n(1))\,-\,[\eta(n-1)-\rho(1)]H'(m_n(n-1))\Big\}\\
+\,&\sum_{j=1}^{n-2}\Big[\frac{1}{2}\big(\eta(j)+\eta(j+1)-2\eta(j)\eta(j+1)\big)\,-\,\rho\big(\frac{j}{n}\big)\{1-\rho\big(\frac{j}{n}\big)\}\Big]\big(H'(m_n(j))\big)^2\\
+\,&\sum_{j=2}^{n-1}\big[\eta(j)\,-\,\rho\big(\frac{j}{n}\big)\big]H''(m_n(j))\,+\,O(1).
\end{split}
\end{equation*}
\end{lemma}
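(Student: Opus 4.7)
The approach is to Taylor-expand the exponentials in \eqref{AB1} and then apply Abel summation. Write $\phi_j := H(m_n(j))$. By smoothness of $H$ on $[0,1]$ (from smoothness of $\rho$ and $\rho\in(0,1)$) together with \eqref{mid}, one has $|\phi_{j+1}-\phi_j|=O(1/n)$ uniformly for $j\in\{1,\ldots,n-2\}$. Expanding $e^{\pm(\phi_{j+1}-\phi_j)}-1$ through second order leaves a cubic remainder of order $n^{-3}$ per term, contributing $O(1)$ to \eqref{AB1} after multiplication by $n^2$ and summation. Combining the two sums thus reduces \eqref{AB1} to $L_n+Q_n+O(1)$, where
\begin{equation*}
L_n:=n^2\sum_{j=1}^{n-2}\bigl[\eta(j)-\eta(j+1)\bigr](\phi_{j+1}-\phi_j),\quad Q_n:=\frac{n^2}{2}\sum_{j=1}^{n-2}\bigl[\eta(j)+\eta(j+1)-2\eta(j)\eta(j+1)\bigr](\phi_{j+1}-\phi_j)^2.
\end{equation*}

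For $Q_n$, a first-order Taylor expansion of $H$ yields $n^2(\phi_{j+1}-\phi_j)^2=(H'(m_n(j)))^2+o(1)$ per term by \eqref{mid}, reproducing the unadjusted quadratic target. For $L_n$, Abel summation gives
\begin{equation*}
L_n=n^2\eta(1)(\phi_2-\phi_1)-n^2\eta(n-1)(\phi_{n-1}-\phi_{n-2})+n^2\sum_{j=2}^{n-2}\eta(j)(\phi_{j+1}-2\phi_j+\phi_{j-1}).
\end{equation*}
First-order Taylor at the boundaries produces $n^2(\phi_2-\phi_1)=nH'(m_n(1))+o(n)$ (and symmetrically on the right), while a second-order Taylor expansion at an interior $m_n(j)$, exploiting the asymptotic uniformity of the interior spacings furnished by \eqref{mid}, gives $n^2(\phi_{j+1}-2\phi_j+\phi_{j-1})=H''(m_n(j))+o(1)$. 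Extending the interior sum to $j=n-1$ is an $O(1)$ adjustment, so $L_n+Q_n$ recovers the $\eta$-dependent parts of the claimed right-hand side.

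The remaining discrepancy with the target is the $\rho$-correction
\begin{equation*}
\mc E_n:=n\rho(0)H'(m_n(1))-n\rho(1)H'(m_n(n-1))+\sum_{j=1}^{n-2}\rho(j/n)\bigl(1-\rho(j/n)\bigr)(H'(m_n(j)))^2+\sum_{j=2}^{n-1}\rho(j/n)H''(m_n(j)),
\end{equation*}
which should be $O(1)$. This is a discretization of the continuous identity
\begin{equation*}
\rho(1)H'(1)-\rho(0)H'(0)=\int_0^1\rho(u)(1-\rho(u))(H'(u))^2\,du+\int_0^1\rho(u)H''(u)\,du,
\end{equation*}
which follows by integration by parts and $\rho'=\rho(1-\rho)H'$ from \eqref{rel1}. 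The plan is to realize this cancellation discretely: an Abel summation applied to $\sum_{j=2}^{n-1}\rho(j/n)H''(m_n(j))$ produces boundary contributions that cancel $n\rho(0)H'(m_n(1))$ and $-n\rho(1)H'(m_n(n-1))$, while the resulting interior sum is a Riemann approximation to $\int\rho'H'\,du=\int\rho(1-\rho)(H')^2\,du$ matching the $(H')^2$-sum in $\mc E_n$. The main obstacle will be controlling the Riemann-sum errors uniformly and handling the mismatch between the evaluation points $j/n$ and $m_n(j)$, which differ by at most $O(n^{\theta-1})$ near the boundary; the smoothness of $\rho$ and $H$ on $[0,1]$ and the explicit boundary structure of $m_n$ are precisely what make these estimates tractable.
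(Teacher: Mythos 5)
Your argument follows the paper's proof step for step: Taylor expansion of the exponentials to second order (with the cubic remainder absorbed into $O(1)$), Abel summation of the linear term to produce the boundary terms $n\eta(1)H'(m_n(1))-n\eta(n-1)H'(m_n(n-1))$ plus the discrete second derivative, and cancellation of the $\rho$-dependent remainder via a discrete integration by parts resting on $\rho'=\rho(1-\rho)H'$, i.e.\ relation \eqref{rel1}. The final step that you only outline as a plan is precisely the ``discrete version of the integral by parts formula'' that the paper itself asserts without further detail, so the proposal is correct and takes essentially the same route as the paper.
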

\begin{proof}
Applying Taylor expansion to the exponential function, since $\rho$ is smooth, by \eqref{mid}, the sum  in \ref{AB1} is equal to
\begin{equation*}
\begin{split}
&n^2\sum_{j=1}^{n-2}\Big[\eta(j)-\eta(j+1)\Big]\Big[H(m_n(j+1))-H(m_n(j))\Big]\,+\,O(1)\\
+\,&\frac{n^2}{2}\sum_{j=1}^{n-2}\Big[\eta(j)+\eta(j+1)-2\eta(j)\eta(j+1)\Big]\Big[H(m_n(j+1))-H(m_n(j))\Big]^2.
\end{split}
\end{equation*}
Applying Taylor expansion to the function $H$, the sum above is equal to 
\begin{equation*}
\begin{split}
&n^2\sum_{j=1}^{n-2}\Big[\eta(j)-\eta(j+1)\Big]\Big(\frac{1}{n}H'(m_n(j))+\frac{1}{2n^2}H''(m_n(j))\Big)\\
+\,&\frac{n^2}{2}\sum_{j=1}^{n-2}\Big[\eta(j)+\eta(j+1)-2\eta(j)\eta(j+1)\Big]\Big(\frac{1}{n}H'(m_n(j))\Big)^2\,+\,O(1).
\end{split}
\end{equation*}
By a summation by parts and Taylor expansion to $H'(\cdot)$, we have 
\begin{equation*}
\begin{split}&n^2\sum_{j=1}^{n-2}\big[\eta(j)-\eta(j+1)\big]\frac{1}{n}H'(m_n(j))\\
=\,&n\Big\{\eta(1)H'(m_n(1))-\eta(n-1)H'(m_n(n-1))+\sum_{j=2}^{n-1}\eta(j)\frac{1}{n}H''(m_n(j))\Big\}+O(1)
\end{split}
\end{equation*}
and 
$$n^2\sum_{j=1}^{n-2}[\eta(j)-\eta(j+1)]\frac{1}{2n^2}H''(m_n(j))=O(1).$$
Up to now, we have proved that the sum in \ref{AB1} is equal to
\begin{equation}\label{sum1}
\begin{split}
&n\Big\{\eta(1)H'(m_n(1))-\eta(n-1)H'(m_n(n-1))+\sum_{j=2}^{n-1}\eta(j)\frac{1}{n}H''(m_n(j))\Big\}\\
+\,&n^2\sum_{j=1}^{n-1}[\eta(j)+\eta(j+1)-2\eta(j)\eta(j+1)]\frac{1}{2}\Big(\frac{1}{n}H'(m_n(j))\Big)^2+O(1)
\end{split}
\end{equation}

Since the function $H$ and $\rho$ are smooth on $[0,1]$, by relation \eqref{rel1}, we obtain a discrete version of the integral by parts formula:
\begin{equation*}
\begin{split}
&\sum_{j=2}^{n-2}H''(m_n(j))\rho\big(\frac{j}{n}\big)+\sum_{j=1}^{n-1}\big(H'(m_n(j))\big)^2\rho\big(\frac{j}{n}\big)\{1-\rho\big(\frac{j}{n}\big)\}\\
=&\,n\big[\rho(\frac{n-1}{n}) H'(m_n(n-1))-\rho(\frac{1}{n}) H'(m_n(1))\big]+O(1).
\end{split}
\end{equation*}
Substracting the above identity in equation \eqref{sum1}, the sum in \ref{AB1} is equal to
\begin{equation*}
\begin{split}
&n\Big\{\big[\eta(1)-\rho\big(\frac{1}{n}\big)\big]H'(m_n(1))\,-\,\big[\eta(n-1)-\rho\big(\frac{n-1}{n}\big)\big]H'(m_n(n-1))\Big\}\\
+\,&\sum_{j=1}^{n-2}\Big[\frac{1}{2}(\eta(j)+\eta(j+1)-2\eta(j)\eta(j+1))\,-\,\rho\big(\frac{j}{n}\big)\{1-\rho\big(\frac{j}{n}\big)\}\Big]\big(H'(m_n(j))\big)^2\\
+\,&\sum_{j=1}^{n-1}\big[\eta(j)\,-\,\rho\big(\frac{j}{n}\big)\big]H''(m_n(j))\,+\,O(1).
\end{split}
\end{equation*}
\end{proof}

For any point $x=(x_1,x_2,\cdots,x_n)\in U_n$ and $1\leq i\leq d$, we define a one-dimensional set which contains $x$\,: 
$$S_x^i\,:=\,\{y\in U_n:\, y_j=x_j, \quad \forall \, j\neq i\}.$$
Observe that for each $S_x^i$, there is a unique point $x'\in \Gamma_n^{i,+}$ such that $S_{x'}^i=S_x^i$. From this observation we see that the total number of set $S_x^i$ is at most  $d n^{d-1}$.
Applying Lemma \ref{computation} to each set $S_x^i$, we have
\begin{equation}\label{AB}
\begin{split}
A_n+B_n&\,=\,n\sum_{i=1}^d\sum_{x\in\Gamma_n^{i,+}}\big[\eta(x)-g([\frac{x}{n}])\big]\partial_{u_i}H(t,m_n(x))\\
&\,-\,n\sum_{i=1}^d\sum_{x\in\Gamma_n^{i,-}}\big[\eta(x)-g([\frac{x}{n}])\big]\partial_{u_i}H(t,m_n(x))\\
&\,+\,\sum_{i=1}^d \sum_{x\in\Gamma_n^{i,+}}\sum_{j=0}^{n-2}R^t_n(x+je_i)\big(\partial_{u_i}H(t,m_n(x+je_i))\big)^2\\
&\,+\,\sum_{x\in U_n}\big[\eta(x)\,-\,\rho(t,m_n(x))\big]\sum_{i=1}^d\partial_{u_i u_i}^2H(t,m_n(x))\,+\, o(n^d)\\
\end{split}
\end{equation}
where for a configuration $\eta\in\Omega_n$ and a fixed $1\leq i\leq d$, the function $R^t_{n,i}:U_n\to\bb R$ is defined by
\begin{equation*}
R^t_{n,i}(x)\,:=\,\frac{1}{2}\big\{\eta_t(x)+\eta_t(x+e_i)-2\eta_t(x)\eta_t(x+e_i)\big\}\,-\,\rho(t,m_n(x))\big[1-\rho(t,m_n(x+e_i))\big],
\end{equation*}
if $x\notin \Gamma_n^{i,-}$, and 
$R^t_{n,i}(x)\,:=\,0$
otherwise.

On the other hand, it is easy to check that
\begin{equation}\label{CD}
C_n+D_n\,=\,cn^{2-\theta}\sum_{x\in\Gamma_n}\frac{g([\frac{x}{n}])-\rho(t,m_n(x))}{\rho(t,m_n(x))\big[1-\rho(t,m_n(x))\big]}\big[\eta(x)\,-\,\rho(t,m_n(x))\big].
\end{equation}

\subsection{Computation of $\frac{\partial_t\Psi_t^N}{\Psi_t^n}(\eta)$}

A straightforward computation shows that
$$\frac{\partial_t\Psi_t^n}{\Psi_t^n}(\eta)=\sum_{x\in U_n}\sum_{i=1}^d\frac{\partial_t\rho(t,m_n(x))}{\rho(t,m_n(x))\big[1-\rho(t,m_n(x))\big]}\big[\eta(x)-\rho(t,m_n(x))\big]$$
Since $\rho(t,\cdot)$ is the solution of the heat equation and $m_n(x)\in U$ for every $x\in U_n$,
\begin{equation*}
\frac{\partial_t\Psi_t^n}{\Psi_t^n}(\eta)=\sum_{x\in U_n}\sum_{i=1}^d\frac{\partial_{u_iu_i}^2\rho(t,m_n(x))}{\rho(t,m_n(x))\big[1-\rho(t,m_n(x))\big]}\big[\eta(x)-\rho(t,m_n(x))\big].
\end{equation*}
By \eqref{rel1} \eqref{rel2},  for every $u\in U$,
\begin{equation}\label{rel3}
\frac{\partial_{u_iu_i}^2\rho(t,u)}{\rho(t,u)(1-\rho(t,u))}\,=\,\partial^2_{u_iu_i}H(t,u)\,+\,(\partial_{u_i}H(t,u))^2\big(1-2\rho(t,u)\big).
\end{equation}
Therefore we can conclude that
\begin{equation}\label{dtPsi}
\begin{split}
\frac{\partial_t\Psi_t^n}{\Psi_t^n}(\eta)\,=\,&\sum_{x\in U_n}\sum_{i=1}^d \partial^2_{u_iu_i}H(t,m_n(x))\big[\eta(x)-\rho(t,m_n(x))\big]\\
+\,&\sum_{x\in U_n}\sum_{i=1}^d(\partial_{u_i}H(t,m_n(x)))^2\big(1-2\rho(t,m_n(x))\big)\big[\eta(x)-\rho(t,m_n(x))\big]\\
\end{split}
\end{equation}

\subsection{Proof of Lemma \ref{order}}
Given $\ell \in \bb N$ and $x\in U_n$, let $B_{\ell}(x)$ be the set of collection of points in $U_n$ with distance to $x$ at most $\ell$:
$$B_{\ell}(x):=\big\{y\in U_n:\, |y-x|\leq \ell \,\big\}.$$ 
Let $\eta^{\ell}(x)$ be the average value of ${\eta}(\cdot)$ in $B_{\ell}(x)$:
$${\eta}^{\ell}(x):=\frac{1}{|B_{\ell}(x)|}\sum_{y\in B_{\ell}(x)}{\eta}(y).$$
Note that for the $x\in U_n$ which is close enough to the boundary $\Gamma_n$, the size of $B_{\ell}(x)$ may not equal $(2\ell+1)^d$. However, we always have 
$$(\ell+1)^d\,\leq\,|B_{\ell}(x)|\,\leq\,(2\ell+1)^d$$
for every $x\in U_n$. In addition, for any $\ell>0$ fixed, almost all the sites $x\in U_n$ satisfying that $|B_{\ell}(x)|\,=\,(2\ell+1)^d$: 
\begin{equation}\label{Vn}
\lim_{n\to\infty}\frac{|V_n|}{|U_n|}\,=\,1,
\end{equation}
where $V_n:=\{x\in U_n:\, |B_{\ell}(x)|\,=\,(2\ell+1)^d\}.$

Now we give the proof of Lemma \ref{order}.
\begin{proof}
In view of \eqref{4sum}\eqref{rel1}\eqref{AB}\eqref{CD}\eqref{dtPsi}, the common terms in $\frac{n^2L^*_n\Psi_t^n}{\Psi_t^n}$ and $\frac{\partial_t \Psi_t^n}{\Psi_t^n}$ are the ones with term $\partial_{u_i u_i}^2H(t,m_n(x))$, so  they cancel with each other. Therefore $\Big(\frac{n^2L^*_n\Psi_t^n}{\Psi_t^n}\,-\,\frac{\partial_t \Psi_t^n}{\Psi_t^n}\Big)(\eta)$ is equal to
\begin{equation}\label{fsum}
\begin{split}
&\,\sum_{i=1}^d\sum_{x\in\Gamma_n^{i,+}}\frac{n\partial_{u_i}\rho(t,m_n(x))+cn^{2-\theta}[g([x/n])-\rho(t,m_n(x))]}{\rho(t,m_n(x))\big(1-\rho(t,m_n(x))\big)}[\eta(x)-\rho(t,m_n(x))]\\
+&\,\sum_{i=1}^d\sum_{x\in\Gamma_n^{i,-}}\frac{-n\partial_{u_i}\rho(t,m_n(x))+cn^{2-\theta}[g([x/n])-\rho(t,m_n(x))]}{\rho(t,m_n(x))\big(1-\rho(t,m_n(x))\big)}[\eta(x)-\rho(t,m_n(x))]\\
+&\,\sum_{x\in U_n}\sum_{i=1}^d\big(\partial_{u_i}H(m_n(x))\big)^2\Big[R_{n,i}^t(x)\,-\,\big(1-2\rho(t,m_n(x))\big)\big(\eta(x)-\rho(t,m_n(x))\big)\Big]
\end{split}
\end{equation}
with an error term of order $o(n^d)$. 

We first introduce a set which can be ignored in the estimate of the above sum:
$$\Gamma_{n,\Delta}\,=\,\{x\in\Gamma_n:\exists\,\, \text{more than one coodinate}\,\,i, \,\text{s.t.}\,\,  (x_i-1)(x_i-n+1)=0 \}.$$
It is not hard to see that $|\Gamma_{n,\Delta}|=O(n^{d-2})$. Thus the total contribution of terms associated to points in $\Gamma_{n,\Delta}$ in the sum \eqref{fsum} is of order $o(n^d)$ for every $\theta\geq 0$. So we just need to consider the points not contained in $\Gamma_{n,\Delta}$.

By definition of $[\cdot]$, if $x\in\Gamma_n^{i,+}\setminus \Gamma_{n,\Delta}$, then $[x/n]$ is equal to $m_n(x-e_i)$ which belongs to the boundary $\Gamma$.  
For the case $\theta\in [0,1)$, the boundary condition of the corresponding hydrodynamic limit equation tells that $g([x/n])\,=\, \rho(t,m_n(x-e_i))$. In section \ref{sp}, we will show that $\rho(t,\cdot)\in C^\infty(\overline{U})$. Thus by Taylar expansion,
$$\rho(t,m_n(x))\,-\, \rho(t,m_n(x-e_i))\,=\, c^{-1}n^{\theta-1}\partial_{u_i}\rho(t,m_n(x))\,+\,o(1),$$
because $m_n$ is chosen to satisty $m_n(x)\,-\,m_n(x-e_i)\,=\,c^{-1} \,n^{\theta-1}$. This implies that the first term in the sum \eqref{fsum} is of order $o(n^d)$.  For the case $\theta=1$, the boundary condition of the corresponding hydrodynamic limit equation gives that, for every $x\in\Gamma_n^{i,+}$,
$$\partial_{u_i}^+\rho(t,m_n(x-e_i))\,=\,c\,\big[\rho(t,m_n(x-e_i))\,-\,g([x/n])\big]$$
Since $\rho(t,\cdot)\in C^\infty(\overline{U})$ and $m_n(x)-m_n(x-e_i)=n^{-1}$, 
$$\partial_{u_i}\rho(t,m_n(x))\,=\, \partial_{u_i}^+\rho(t,m_n(x-e_i))\,+\, o(1)$$
and
$$\rho(t,m_n(x))\,=\, \rho(t,m_n(x-e_i))\,+\, o(1).$$
These three identities above imply that the first term in the sum \eqref{fsum} is of order $o(n^d)$. Finally let us consider the case $\theta>1$. From the boundary condition of the corresponding hydrodynamic limit equation, for every $x\in\Gamma_n^{i,+}$,
$$\partial_{u_i}^+\rho(t,m_n(x-e_i))\,=\,0.$$
Combining the fact that $\rho(t,\cdot)\in C^\infty(\overline{U})$ and $m_n(x)-m_n(x-e_i)=n^{-1}$, 
$$\partial_{u_i}\rho(t,m_n(x))\,=\, o(1).$$
The term 
$$cn^{2-\theta}[g([x/n])-\rho(t,m_n(x))]$$ 
is of order $o(n)$ since $\theta>1$. Therefore we still have that the first term in the sum \eqref{fsum} is of order $o(n^d)$. The second term can be estimated in the same way so we omit the proof.

 On the other hand, since $H$ and $\rho$ are smooth functions, a summation by parts permits to replace $\eta(x)$ by $\eta^{\ell}(x)$ in the third term of \eqref{fsum}, up to an error term $o_{n,\ell}(1)\times n^d$, where $o_{n,\ell}(1)$ is a term which  vanishes after letting $n\to\infty$ then $\ell\to\infty$. Applying Lemma \ref{LDP}, we finish the proof.
\end{proof}

\begin{lemma}\label{LDP}
Let $\psi(x)=x(1-x)$ for $x\in\bb R$ and 
$$M(x,y)=\psi(x)\,-\,\psi(y)\,-\,\psi'(y)(x-y).$$
Then for every $0\leq t\leq T$,
\begin{equation}\label{oneblock}
\begin{split}
\frac{1}{n^d}&\bb E\Big[\Big\{\sum_{x\in U_n}\big(\partial_{u_i}H(m_n(x))\big)^2M(\eta_t^{\ell}(x),\rho(t,m_n(x))\Big\}\Big]\\
\leq\,&\frac{H(\mu_t^n|\nu^n_t)}{n^d}\,+\,o_{n,\ell}(1).
\end{split}
\end{equation}
\end{lemma}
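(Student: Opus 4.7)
The central observation is that, because $\psi(x) = x(1-x)$ is a quadratic polynomial with constant second derivative $\psi''(y) = -2$, Taylor's theorem yields the exact identity
\[
M(x,y) \;=\; \psi(x) - \psi(y) - \psi'(y)(x-y) \;=\; \tfrac{1}{2}\psi''(y)(x-y)^2 \;=\; -(x-y)^2 \;\le\; 0
\]
for all $x,y \in \mathbb R$. Combined with the (pointwise) non-negativity of the weight $(\partial_{u_i}H(t,m_n(x)))^2$, this forces the random variable
\[
X \;:=\; \sum_{x \in U_n}\bigl(\partial_{u_i}H(t,m_n(x))\bigr)^{2}\, M\bigl(\eta_t^{\ell}(x),\, \rho(t,m_n(x))\bigr)
\]
to be non-positive on $\Omega_n$. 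Note that $H(t,\cdot)$ is smooth on $\overline U$ by Corollary \ref{col}, so $(\partial_{u_i}H)^2$ is uniformly bounded and $X$ is well-defined.

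The plan is then to pass from $\mu_t^n$ to $\nu_t^n$ by the entropy inequality
\[
\int \phi \, d\mu \;\le\; H(\mu \mid \nu) \;+\; \log \int e^{\phi}\, d\nu,
\]
valid for any $\mu \ll \nu$ and any measurable $\phi$. Applying it with $\mu = \mu_t^n$, $\nu = \nu_t^n$, $\phi = X$ and dividing by $n^d$ gives
\[
\frac{1}{n^d}\,\mathbb E_{\mu_t^n}[X] \;\le\; \frac{H(\mu_t^n\mid \nu_t^n)}{n^d} \;+\; \frac{1}{n^d}\log \mathbb E_{\nu_t^n}\!\bigl[e^{X}\bigr].
\]
Because $X \le 0$ pointwise, $e^{X}\le 1$, so $\log \mathbb E_{\nu_t^n}[e^{X}] \le 0$, and the estimate \eqref{oneblock} follows with $o_{n,\ell}(1) = 0$ (one could equally well argue directly: $\mathbb E_{\mu_t^n}[X]\le 0$ trivially, and the right-hand side of \eqref{oneblock} is non-negative).

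The only real content is thus the identity $M(x,y) = -(x-y)^2$, which is a manifestation of the fact that the susceptibility $\chi(\rho)=\rho(1-\rho)$ of SSEP is a concave quadratic. I do not expect any serious obstacle here. For general driven lattice gases with a non-quadratic mobility, this lemma would instead require a genuine one-block large-deviations estimate, controlling $\log \mathbb E_{\nu_t^n}\!\bigl[\exp\sum_x c(x)\,M(\eta^\ell(x),\rho(t,m_n(x)))\bigr]$ via a second order Taylor expansion together with the subgaussian tails of $\eta^\ell(x)-\rho$ under the product measure $\nu_t^n$ (producing an $o_{n,\ell}(1)$ error after sending $n\to\infty$ and then $\ell\to\infty$, and using \eqref{Vn} to discard the $o(n^d)$ sites near the boundary where $|B_\ell(x)|<(2\ell+1)^d$). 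In the SSEP setting treated here, the quadratic structure collapses this entire machinery to the sign argument above.
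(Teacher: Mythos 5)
Your proof is correct, and it takes a genuinely different (and much shorter) route than the paper. Your key identity is right: since $\psi(x)=x(1-x)$ is quadratic with $\psi''\equiv-2$, the Taylor remainder is exact and $M(x,y)=-(x-y)^2\le 0$; combined with the non-negative weights $(\partial_{u_i}H)^2$ this makes the sum pointwise non-positive, so its expectation under $\mu_t^n$ is $\le 0\le H(\mu_t^n|\nu_t^n)/n^d$, and \eqref{oneblock} holds with $o_{n,\ell}(1)=0$. The paper instead runs the full one-block large-deviations machinery: entropy inequality with respect to $\nu_t^n$, restriction to the set $V_n$ of \eqref{Vn}, H\"older's inequality to decouple the sum into $(2\ell+1)^d$ families of independent blocks, and the Laplace--Varadhan lemma to identify the limiting exponential moment as $\sup_{\lambda}\{C(H)M(\lambda,\rho)-I_\rho(\lambda)\}$, which is then shown to vanish because $M(\cdot,\rho)$ attains its maximum $0$ at $\lambda=\rho$ while $I_\rho$ attains its minimum $0$ there. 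Note that the paper's final step is itself just the observation that $M(\cdot,\rho)\le 0$, so your argument isolates the only fact that is actually used and discards the scaffolding. What the paper's approach buys is generality and robustness: it is the standard template that survives when $M$ is a genuine second-order Taylor remainder of a non-quadratic mobility (hence not sign-definite), where one must really beat $M$ against the rate function $I_\rho$; your observation that the quadratic susceptibility of SSEP collapses this to a sign argument is accurate. For the lemma as stated, your proof is complete.
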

\begin{proof}
First of all, by \eqref{Vn}, it is enough to prove the inequality \eqref{oneblock} with the sum of $x$ over $U_n$ replaced by $V_n$. By entropy inequality, the expectation in the lemma with sum over $V_n$ is bounded by 
\begin{equation*}
\frac{1}{\gamma }\left\{H(\mu_t^n|\nu_t^n)\,+\,\log E_{\nu_t^n}\Big[\exp\Big\{\gamma\sum_{x\in V_n}\big(\partial_{u_i}H(m_n(x))\big)^2M(\eta^{\ell}(x),\rho(t,m_n(x))\Big\}\Big]\right\},
\end{equation*}
for every $\gamma>0$. In the proof of this lemma we just need to choose $\gamma=1$. For each $x\in U_n$, let us define the function $F_{\ell}:U_n\to\bb R$
$$F_{\ell}(x)\,=\,\big(\partial_{u_i}H(m_n(x))\big)^2M(\eta^{\ell}(x),\rho(t,m_n(x)).$$
 To prove the lemma, it remains to show that, 
\begin{equation}\label{expldp}
\lim_{\ell\to\infty}\lim_{n\to\infty}\frac{1}{n^d}\log E_{\nu_t^n}\Big[\exp\Big\{\sum_{x\in V_n}F_{\ell}(x)\Big\}\Big]\,\leq\,0.
\end{equation}
Notice that for any $x,y\in U_n$ such that $|x-y|\geq2\ell +1$, $\eta^{\ell}(x)$ and $\eta^{\ell}(y)$ are independent under measure $\nu^n_t$. Therefore by Holder inequality,
\begin{equation}\label{holder}
\begin{split}
&\log E_{\nu_t^n}\Big[\exp\Big\{\sum_{x\in V_n}F_{\ell}(x)\Big\}\Big]\\
\,=\,&\log E_{\nu_t^n}\Big[\exp\Big\{\sum_{|z|\leq\ell}\sum_{\substack{x= z+(2\ell+1)y\in V_n\\\text{for some}\, z\in\bb Z^d}}F_{\ell}(x)\Big\}\Big]\\
\,\leq\,& \frac{1}{(2\ell+1)^d}\sum_{|z|\leq\ell}\log E_{\nu_t^n}\Big[\exp\Big\{(2\ell+1)^d\sum_{\substack{x= z+(2\ell+1)y\in V_n\\\text{for some}\,\, z\in\bb Z^d}}F_{\ell}(x)\Big\}\Big]\\
\,=\,& \frac{1}{(2\ell+1)^d}\sum_{x\in V_n}\log E_{\nu_t^n}\Big[\exp\Big\{(2\ell+1)^d F_{\ell}(x)\Big\}\Big].\\
\end{split}
\end{equation}
Consider a sequence of i.i.d. Bernoulli random variables $\{X_1, X_2,\cdots\}$. For each positive integer $n$, let $\overline{X}_n$ be the average of the first $n$ random variables:
$$\overline{X}_n\,=\,\frac{1}{n}\sum_{k=1}^n X_k.$$
Since $\rho(t,\cdot)$ and $H(t,\cdot)$ are smooth and $\nu_t^n$ is a product measure, there exists a constant $C=C(H)>0$ such that for all $x\in V_n$,
\begin{equation}\label{unifbound}
\begin{split}
\lim_{n\to\infty}&E_{\nu_t^n}\Big[\exp\Big\{(2\ell+1)^d F_{\ell}(x)\Big\}\Big]\\
\leq\,\sup_{\rho\in (\varepsilon_0,1-\varepsilon_0)}&E_{\nu_{\rho}}\Big[\exp\Big\{(2\ell+1)^d C(H)^2 M(\overline{X}_{\ell}, \rho)\Big\}\Big].
\end{split}
\end{equation}
where $\nu_\rho$ is the Bernoulli measure with mean $\rho$.
By Laplace-Varadhan Lemma, 
\begin{equation}\label{vl}
\begin{split}
&\lim_{\ell\to\infty}\frac{1}{(2\ell+1)^d}\log E_{\nu_{\rho}}\Big[\exp\Big\{(2\ell+1)^d C(H)^2 M(\overline{X}_{\ell}, \rho)\Big\}\Big]\\
&=\,\sup_{\lambda\in[0,1]}\Big\{C(H)M(\lambda,\rho)\,-\, I_{\rho}(\lambda)\Big\}
\end{split}
\end{equation}
where $I_\rho(\cdot)$ is the large deviations rate function given by 
$$I_\rho(\lambda)\,=\,\lambda\log\frac{\lambda}{\rho}\,+\,(1-\lambda)\log\frac{1-\lambda}{1-\rho}.$$
Note that $M(\cdot,\rho)$ and $I_\rho(\cdot)$ both vanish at $\lambda=\rho$. By computing the derivative, we see that $M(\cdot,\rho)$ attains its maximum at $\rho$ and $I_\rho(\cdot)$ attains its minimum at $\rho$. Thus the right hand side of \eqref{vl} is equal to $0$.  \eqref{expldp} follows from \eqref{holder} \eqref{unifbound} and \eqref{vl}.
\end{proof}

\section{Properties of the solution $\rho$}\label{sp}

The existence and uniqueness of the solution $\rho$ of equation \eqref{<1}(resp. \eqref{=1} and \eqref{>1}) is proved in Theorem 5.2(resp. Theorem 5.3) of Chapter IV in \cite{lsu68}. Moreover it is shown there that $\rho$ is smooth on $[0,T]\times\overline{U}$ under assumptions (1)(2)(3) given in subsection \ref{hle}.

In the following theorem, we establish the maximum principle. Then we apply it to give upper and lower bounds of the solution.
\begin{theorem}[Maximum Principle]
Assume that  $\rho(\cdot,\cdot)$ is the solution of PDE \eqref{<1} or \eqref{=1} or\eqref{>1} such that $\rho(\cdot,\cdot)$ is smooth on $[0,T]\times\overline{U}$. Then if  $\rho(\cdot,\cdot)$ is the solution of PDE  \eqref{<1} or \eqref{=1}, 
$$\max_{[0,T]\times \overline{U}}\rho(t,u)\,=\,\max\Big\{\max_{\overline{U}}\rho_0(u), \max_\Gamma g(u)\Big\};$$
if  $\rho(\cdot,\cdot)$ is the solution of PDE  \eqref{>1}, 
$$\max_{[0,T]\times \overline{U}}\rho(t,u)\,=\,\max_{\overline{U}}\rho_0(u).$$
\end{theorem}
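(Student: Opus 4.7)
The plan is to combine the classical parabolic maximum principle with a linear perturbation that turns $\rho$ into a strict subsolution of the heat equation, and then to treat each type of boundary condition separately. Fix $\varepsilon > 0$ and set $v_\varepsilon(t,u) := \rho(t,u) - \varepsilon t$; since $\rho$ solves the heat equation in $U$,
\begin{equation*}
(\partial_t - \Delta) v_\varepsilon \,=\, -\varepsilon \,<\, 0 \quad \text{on } (0,T] \times U.
\end{equation*}
By compactness, $v_\varepsilon$ attains its maximum on $[0,T]\times\overline U$ at some point $(t_0, u_0)$. If $(t_0, u_0) \in (0,T]\times U$, the standard first- and second-order conditions $\partial_t v_\varepsilon(t_0,u_0) \geq 0$ and $\Delta v_\varepsilon(t_0,u_0) \leq 0$ contradict the strict inequality above. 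Hence the maximum lies on the parabolic boundary $(\{0\}\times\overline U) \cup ((0,T]\times\Gamma)$, and the case $t_0 = 0$ gives directly $v_\varepsilon(t_0,u_0) = \rho_0(u_0) \leq \max_{\overline U}\rho_0$.

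For $t_0 > 0$ and $u_0 \in \Gamma$, I would split into the three PDE types. For \eqref{<1}, the Dirichlet condition gives immediately $v_\varepsilon(t_0,u_0) \leq g(u_0) \leq \max_\Gamma g$. For \eqref{=1}, assume $u_0 \in \Gamma^{i,-}$; only $h \to 0^-$ is admissible in the definition of $\partial^-_{u_i}$ at $u_0$, and since $v_\varepsilon(t_0, u_0 + h e_i) \leq v_\varepsilon(t_0, u_0)$ for small $h < 0$, the definition forces $\partial^-_{u_i} v_\varepsilon(t_0,u_0) \geq 0$; combined with the Robin condition $\partial^-_{u_i}\rho = c(g - \rho)$, this yields $\rho(t_0,u_0) \leq g(u_0)$, and the case $u_0 \in \Gamma^{i,+}$ is entirely symmetric via $\partial^+_{u_i}\rho = c(\rho - g)$. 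For \eqref{>1}, the Neumann condition provides no pointwise comparison, so I would invoke the parabolic Hopf boundary-point lemma: a strict subsolution attaining a maximum at a lateral-boundary point must have strictly positive outward normal derivative, contradicting the vanishing Neumann data $\partial^\pm_{u_i}\rho = 0$. This forces $t_0 = 0$ in case \eqref{>1}, and hence $v_\varepsilon \leq \max_{\overline U}\rho_0$.

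Putting the three cases together, $v_\varepsilon \leq M$ where $M$ denotes the right-hand side of the claimed identity; since $\rho \leq v_\varepsilon + \varepsilon T$, letting $\varepsilon \downarrow 0$ gives $\max\rho \leq M$. The reverse inequality comes from trivial evaluations: $\rho(0,\cdot) = \rho_0$ gives $\max\rho \geq \max_{\overline U}\rho_0$, and in case \eqref{<1} the equality $\rho|_\Gamma = g$ gives in addition $\max\rho \geq \max_\Gamma g$.

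The main obstacle will be case \eqref{>1}, where the Neumann boundary condition offers no comparison with any prescribed datum and one must exclude lateral-boundary maxima entirely via Hopf. A minor technical point is that Hopf is classically stated on smooth portions of the boundary, while $\overline U$ has corners at which the interior-ball condition fails; this is resolved by noting that any corner belongs to several smooth faces $\Gamma^{i,\pm}$, on each of which the Hopf argument still applies and produces the required contradiction. The same perturbation $v_\varepsilon$ plus analogous sign reversals yields the lower bound used in Corollary \ref{col}.
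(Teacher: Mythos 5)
Your handling of cases \eqref{<1} and \eqref{=1} is correct and essentially the paper's argument: the paper quotes the weak maximum principle from \cite{e} where you re-derive it via the perturbation $\rho-\varepsilon t$, and your first-order sign analysis of $\partial^{\pm}_{u_i}$ at a lateral maximum combined with the Robin condition is exactly the paper's proof that $\rho(t^0,u^0)\le g(u^0)$. (Like the paper, what you actually establish in the Robin case is the upper bound $\max\rho\le\max\{\max\rho_0,\max_\Gamma g\}$; the reverse inequality involving $\max_\Gamma g$ is neither proved nor true in general, but only the upper bound is needed for Corollary \ref{col}.)

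The genuine gap is in the Neumann case \eqref{>1}, at the edges and corners of the hypercube. The parabolic Hopf boundary-point lemma requires an interior tangent ball at the point where the maximum is attained, and no ball contained in $U$ can be tangent to $\overline{U}$ at a point lying on two or more faces. Your workaround --- that the corner ``belongs to several smooth faces, on each of which the Hopf argument still applies'' --- does not supply the missing ball: Hopf is a pointwise statement at the maximum point, and its conclusion can genuinely fail at a right-angle corner. For instance $-u_1u_2$ is harmonic on the quadrant $\{u_1,u_2>0\}$, is strictly negative inside, attains its maximum $0$ at the origin, and both one-sided derivatives vanish there. At a corner the first-order conditions only give $\partial^{+}_{u_i}v_\varepsilon\le 0$ (resp.\ $\partial^{-}_{u_i}v_\varepsilon\ge 0$) in the admissible inward directions, which is perfectly compatible with the homogeneous Neumann data, so a lateral maximum at a corner is not excluded by your argument.

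Be aware that no purely first-order barrier can rescue this: if $\phi$ satisfies $\Delta\phi\ge 0$, so that $\rho+\phi$ remains a subsolution, then the divergence theorem gives $\int_\Gamma\partial_n\phi\,dS=\int_U\Delta\phi\,du\ge 0$, so $\partial_n\phi$ cannot be strictly negative on all of $\Gamma$, which is what a first-derivative contradiction at every boundary point would require. (For the same reason the paper's own perturbation $\rho-\epsilon\sum_i(u_i-\tfrac12)^2$ satisfies $\partial_t\rho_\epsilon-\Delta\rho_\epsilon=2d\epsilon>0$, i.e.\ it is a strict supersolution rather than the subsolution claimed there, so simply copying that step would not close your gap either.) A clean way to finish case \eqref{>1} is to extend $\rho$ by even reflection across each face: smoothness up to $\Gamma$ together with $\partial^{\pm}_{u_i}\rho=0$ makes the reflected function a (periodic) classical solution of the heat equation on all of $\bb R^d$, every point of $\overline{U}$ becomes an interior point, and the whole-space weak maximum principle then yields $\max_{[0,T]\times\overline{U}}\rho=\max_{\overline{U}}\rho_0$, corners included.
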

\begin{proof}
First of all, by the weak maximum principle (see Theorem 8 of Chapter 7 in \cite{e} for instance), $\rho(t,u)$ attains its maximum at sets $\{0\}\times \overline{U}$ or $[0,T]\times \Gamma$. This already proves the conclusion for $\rho$ if it is the solution of PDE \eqref{<1}.

If  $\rho(\cdot,\cdot)$ is the solution of PDE  \eqref{=1}, we claim that if the maximum is attained at some point $(t^0,u^0)\in [0,T]\times\Gamma$, then $\rho(t^0,u^0)\leq g(u^0)$. Indeed, if $u^0\in \Gamma_i^+$ for some $0\leq i\leq d$, then we must have that $\partial_{u_i}^+ \rho(t^0,u^0)\leq0$ since $(t^0,u^0)$ is the maximum point, which implies that $\rho(t^0,u^0)\leq g(u^0)$ by the boundary condition.  If $u^0\in \Gamma_i^-$, then we  have that $\partial_{u_i}^+ \rho(t^0,u^0)\geq0$, which still implies that $\rho(t^0,u^0)\leq g(u^0)$. Therefore our claim holds.

Finally let us consider the case when $\rho(\cdot,\cdot)$ is the solution of PDE  \eqref{>1}. Define the function 
$$\rho_\epsilon(t,u)=\rho(t,u)\,-\,\epsilon\sum_{i=1}^d \big(u_i-\frac{1}{2}\big)^2$$
with $\epsilon>0$.
It is easy to check that $\partial_t \rho_\epsilon(t,u)\,\leq\, \Delta \rho_\epsilon(t,u)$ for all $u\in U$. Therefore by  Theorem 8 of Chapter 7 in \cite{e}, $\rho_\epsilon(t,u)$ attains its maximum at $\{0\}\times U$ or $[0,T]\times \Gamma$. Since for every point $u^0\in \Gamma^+$, a direct computation shows that $\partial_{u_i}^+ \rho_\epsilon(t,u^0)\,>\,0$, thus $\rho_\epsilon(t,u)$ cannot attain its maximum at the set $[0,T]\times \Gamma^+$. On the other hand, for every point $u^0\in \Gamma^-$, we have $\partial_{u_i}^+ \rho_\epsilon(t,u^0)\,<\,0$, thus $\rho_\epsilon(t,u)$ cannot attain its maximum at the set $[0,T]\times \Gamma^-$ neither. Therefore the maximum of $\rho_\epsilon(t,u)$ must be attained at $\{0\}\times U$. Letting $\epsilon\to 0$, noticing that $U$ is an open set, we conclude that $\rho(t,u)$ attains its maximum at $\{0\}\times \overline{U}$.
\end{proof}

Analogously we could prove the minimum principle and easily obtain the following corollary.
\begin{corollary}\label{col}
Under assumptions (1)(2)(3) given in subsection \ref{hle}, 
\begin{equation}\label{rhobound}
\rho(t,u)\in (\varepsilon_0,1-\varepsilon_0)
\end{equation}
for every $t\in[0,T]$ and $u\in\overline{U}$.
\end{corollary}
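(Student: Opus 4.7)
The plan is to combine the Maximum Principle just proved with an analogous Minimum Principle, and then invoke the strict bounds on $\rho_0$ and $g$ from assumption (2). The key observation is that each of the PDEs \eqref{<1}, \eqref{=1}, \eqref{>1} is invariant under the substitution $\rho \mapsto \tilde\rho := 1-\rho$, provided one simultaneously replaces $\rho_0$ by $1-\rho_0$ and $g$ by $1-g$. For \eqref{<1} the Dirichlet datum simply flips; for \eqref{>1} the homogeneous Neumann condition is preserved; for \eqref{=1} a short sign check shows that on $\Gamma^{i,-}$,
\begin{equation*}
\partial^{-}_{u_i}\tilde\rho \,=\, -\partial^{-}_{u_i}\rho \,=\, -c\bigl(g-\rho\bigr) \,=\, c\bigl((1-g)-\tilde\rho\bigr),
\end{equation*}
and analogously on $\Gamma^{i,+}$, which is precisely the same Robin condition with $g$ replaced by $1-g$. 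Since $1-\rho_0$ and $1-g$ remain smooth and satisfy the compatibility condition at $t=0$, $\tilde\rho$ is itself a smooth solution of the same PDE with transformed data.

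Applying the Maximum Principle to $\tilde\rho$ and using $\min \rho = 1 - \max \tilde\rho$ therefore yields the mirror Minimum Principle: for \eqref{<1} and \eqref{=1},
\begin{equation*}
\min_{[0,T]\times \overline{U}} \rho(t,u) \,=\, \min\Bigl\{\min_{\overline{U}} \rho_0(u),\ \min_\Gamma g(u) \Bigr\},
\end{equation*}
and for \eqref{>1}, $\min_{[0,T]\times \overline{U}}\rho(t,u) = \min_{\overline{U}}\rho_0(u)$.

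Finally, since $\rho_0$ is continuous on the compact set $\overline{U}$ and $g$ is continuous on the compact set $\Gamma$, and each takes values in the open interval $(\varepsilon_0, 1-\varepsilon_0)$, both functions attain their extrema and these extrema lie strictly in $(\varepsilon_0, 1-\varepsilon_0)$. Combining this fact with the Maximum Principle and the Minimum Principle above immediately yields $\varepsilon_0 < \rho(t,u) < 1-\varepsilon_0$ for every $(t,u)\in[0,T]\times \overline{U}$, which is the statement of the corollary. I do not anticipate any genuine obstacle here; the only step that requires a moment of care is the sign verification for the Robin condition under the substitution $\rho\mapsto 1-\rho$, and that is the computation displayed above.
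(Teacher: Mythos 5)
Your proof is correct and follows essentially the same route as the paper, which derives the corollary from the Maximum Principle together with an "analogously proved" Minimum Principle and the strict bounds on $\rho_0$ and $g$ from assumption (2). Your use of the symmetry $\rho\mapsto 1-\rho$ (with $g\mapsto 1-g$, $\rho_0\mapsto 1-\rho_0$) is simply a clean way of making the paper's "analogously" precise, and your sign check for the Robin condition is right.
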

\smallskip\noindent{\bf Acknowledgments.} The author would like to thank the financial support of FAPESP Grant No.2019/02226-2.


\begin{thebibliography}{99}
\bibitem{fgs16}
T.Franco, P.Gon\c calves and G.M.Sch\"utz. Scaling limits for the exclusion process with a slow site. Stochastic processes and their Applications, Volume 126, Issue 3, 800-831, 2016

\bibitem{kl}
C. Kipnis and C. Landim. {\em Scaling limits of interacting
  particle systems}. Grundlehren der Mathematischen Wissenschaften
  [Fundamental Principles of Mathematical Sciences], 320.
  Springer-Verlag, Berlin (1999).

\bibitem{jm}
M.Jara and O. Menezes. Non-equiliburim fluctuations of interacting particle systems. arXiv:1810.09526

\bibitem{ft}
T.Franco and M.Tavares. Hydrodynamic limit for the SSEP with a slow membrane. Journal of Stastistical Physics. Volume 175, Issue 2, 1413-1442, 2019.

\bibitem{bmns}
R.Baldasso, O.Menezes, A.Neumann and R.R.Souza.  Exclusion process with slow boundary.
Journal of Statistical Physics. Volume 167, Issue 4. 1112-1142, 2017.

\bibitem{fgn13a}
T. Franco, P. Gon\c calves, A. Neumann. Hydrodynamical Behavior of Symmetric Exclusion with Slow Bonds. Ann. Inst. H. Poincaré Probab. Statist. Volume 49, Number 2, 402-427. 2013

\bibitem{fgn13b}
T. Franco, P. Gon\c calves, A. Neumann. Phase transition in equilibrium fluctuations of symmetric slowed exclusion. Stochastic Processes and their Applications, Volume 123, Issue 12, 4156-4185, 2013.

\bibitem{fgn15}
T. Franco, P. Gon\c calves, A. Neumann. Phase Transition of a Heat Equation with Robin's Boundary Conditions and Exclusion Process. Trans. Amer. Math. Soc.Volume 367, 6131-6158, 2015.

\bibitem{fn17}
T. Franco, A. Neumann. Large deviations for the exclusion process with a slow bond.
Annals of Applied Probability Vol. 27, No. 6, 3547--3587, 2017.

\bibitem{fgn19}
T. Franco, P. Gon\c calves, A. Neumann. Non-equilibrium and stationary fluctuations of a slowed boundary symmetric exclusion. Stochastic Processes and their Applications, Vol. 129, Issue 4, pages 1413--1442, 2019.

\bibitem{efgnt20}
D. Erhard, T. Franco, P. Gon\c calves, A. Neumann and M. Tavares. Non-equilibrium fluctuations for the SSEP with a slow bond. Ann. Inst. H. Poincaré Probab. Statist. Volume 56, Number 2, pages 1099--1128, 2020.

\bibitem{fgn21}
T. Franco, P. Gon\c calves, A. Neumann. Large deviations for the SSEP with slow boundary: the non-critical case. arXiv:2107.06998

\bibitem{egn20a}
C.Erignoux, P. Gon\c{c}alves, G. Nahum. Hydrodynamics for SSEP with non-reversible slow boundary dynamics: Part I, the critical regime and beyond, Journal of Statistical Physics, Volume 181, Issue 4, 1433-1469, 2020.


\bibitem{egn20b}
C.Erignoux, P. Gon\c{c}alves, G. Nahum. Hydrodynamics for SSEP with non-reversible slow boundary dynamics: Part I, below the critical regime, ALEA, Volume 17, Issue 4, 791-823, 2020.

\bibitem{bpgn20}
L.Bonorino, R. De Paula, P. Gon\c{c}alves, A. Neumann. Hydrodynamics for the porous medium model with slow reservoirs, Journal of Statistical Physics, 179, 748-788, 2020

\bibitem{fmn20}
S. Fr\'{o}meta, R. Misturini, A. Neumann. The boundary driven zero-range process. arXiv:2006.13479

\bibitem{lsu68}
O. A. Ladyženskaja, V. A. Solonnikov, and N. N. Ural’ceva. Linear and quasilinear equations of
parabolic type. Translated from the Russian by S. Smith. Translations of Mathematical Monographs, Vol. 23. American Mathematical Society, Providence, R.I., 1968.

\bibitem{e}
L.C. Evans, Partial Differential Equations, Graduate Studies in Mathematics, Vol. 19. American Mathematical Society.

\end{thebibliography}
\end{document}